\newcommand{\vargra}[2]{\calg_{#1,#2}}%% cambiato Hans
\newcommand{\immplu}[2]{{\wp_{#1,#2}}}%% cambiato Hans
\newcommand{\posu}{{\uparrow}}
\newcommand{\pogiu}{{\downarrow}}
\newcommand{\calb}{\mathcal{B}}
\newcommand{\calf}{\mathcal{F}}
\newcommand{\calg}{\mathcal{G}}
\newcommand{\calp}{\mathcal{P}}
\newcommand{\cals}{\mathcal{S}}
\newcommand{\calt}{\mathcal{T}}
\newcommand{\PG}[2]{\mathrm{PG}(#1,#2)}
\newtheorem{prop}{Proposition}
\newtheorem{teor}[prop]{Theorem}
\newtheorem{cor}[prop]{Corollary}
\theoremstyle{remark}{\newtheorem{rem}{Remark}}
\begin{document}

\sloppy

\title{Incidence and Combinatorial Properties of Linear Complexes}
%% Spielerei, damit der Titel passt.

\author{Hans Havlicek\\
Institut f\"{u}r Diskrete Mathematik und Geometrie\\ Technische Universit\"{a}t Wien\\
Wiedner Hauptstra{\ss}e 8--10\\ A-1040 Wien\\ Austria\\
\texttt{havlicek@geometrie.tuwien.ac.at}
%% ---
\and Corrado Zanella\\ Dipartimento di Tecnica e Gestione dei Sistemi
Industriali\\ Universit\`{a} di Padova\\ Stradella S.\ Nicola 3\\ I-36100 Vicenza\\
Italy\\
\texttt{corrado.zanella@unipd.it}}

\date{}

\maketitle

\centerline{\it Dedicated to Helmut Karzel on the occasion of his 80th
birthday}

\begin{abstract}
In this paper a generalisation of the notion of polarity is exhibited which
allows to completely describe, in an incidence-geometric way, the linear
complexes of $h$-subspaces. A generalised polarity is defined to be a partial
map which maps $(h-1)$-subspaces to hyperplanes, satisfying suitable linearity
and reciprocity properties. Generalised polarities with the null property give
rise to a linear complexes and vice versa. Given that there exists for $h>1$ a
linear complex of $h$-subspaces which contains no star --this seems to be an
open problem over an arbitrary ground field --the combinatorial structure of a
partition of the line set of the projective space into non-geometric spreads of
its hyperplanes can be obtained. This line partition has an additional
linearity property which turns out to be characteristic.

MSC 2000: 51A45, 51E20, 51E23, 05B25, 05B40

Keywords: Linear complex, Grassmannian, linear mapping, polarity, line spread,
line partition.
\end{abstract}

\section{Introduction}

The notion of a linear complex has been investigated for longer than a century,
and one could regard it as completely known. The classical approach is to
define a linear complex of $h$-dimensional subspaces (shortly: $h$-subspaces)
in $\PG nF$, i.e., the $n$-dimensional projective space over a commutative
field $F$, as the set of all $h$-subspaces whose Grassmann coordinates satisfy
a non-trivial linear equation or, in a coordinate-free way, via a hyperplane
section of the corresponding Grassmann variety. It is well known that each
linear complex of \emph{lines\/} ($h=1$) is the set of all lines which are
contained in their polar subspace with respect to a (possibly degenerate) null
polarity. The classification of these complexes reduces to the well known
classification of non-zero alternating matrices over $F$. Also, a lot was
written by leading classical authors on the theory of linear complexes of
$h$-subspaces for $h>1$, but seemingly, such a theory was developed for the
complex numbers only. As regards the topics which will be dealt with in this
paper see, among others, \cite{Ba52}, \cite{Bo53}, \cite{Bu61}, \cite{Co34},
\cite {Mo34}, and \cite{Se18}.
\par
To our knowledge, no incidence-geometric approach to linear complexes exists
for $h>1$. Our goal is to adopt such a point of view and coherently describe
linear complexes of $h$-subspaces. To accomplish this task, we first collect
some results about \emph{linear mappings\/} and \emph{primes\/} (geometric
hyperplanes) of Grassmannians over arbitrary fields (commutative or skew). In a
Grassmannian over a proper skew field there is only one kind of prime, whence
we rule out skew fields at an early stage of our investigation. In a
Grassmannian over a commutative field primes and linear complexes are the same,
but this result gives no explicit information about linear complexes.
\par
In Section~\ref{se:polaritaet} the notion of \emph{polarity\/} is generalised.
This leads us to an incidence-geometric construction of linear complexes of
$h$-subspaces in $\PG nF$ in terms of \emph{generalised null polarities}. A
crucial problem for linear complexes of $h$-subspaces in $\PG nF$ is the
existence of \emph{singular $(h-1)$-subspaces} for $h>1$. This problem is
addressed in Section~\ref{se:singulaer}, where we also sketch a result from
\cite{Ba52} for the case of complex numbers. However, a solution over an
arbitrary field $F$ presently seems beyond reach. If there exists a linear
complex of $h$-subspaces, $h>1$, without singular $(h-1)$-subspaces, then there
is also a linear complex of planes without singular lines. Therefore, we focus
on linear complexes of planes without singular lines, where we establish an
intriguing relation with \emph{linear line spreads\/}. Any linear complex of
planes without singular lines gives rise to linear line spread, which turns out
to be non-geometric in several cases, e.~g. for a finite ground field. Finally,
Section~\ref{se:partition} is devoted to a connection between linear complexes
of planes without singular lines and \emph{linear line partitions}. This
connection is used to show that certain projective spaces do not admit linear
line partitions.

\section{The Grassmannians of a projective space}

A \emph{semilinear space\/} is a pair $\Sigma=(\calp,\calb)$, where $\calp$ is
a set of \emph{points}, $\calb\subseteq2^{\calp}$ is a set of \emph{lines}, and
the following axioms hold:
\begin{itemize}
\item[(\emph{i\/})]
$|\ell|\ge2$ for each $\ell\in\calb$.

\item[(\emph{ii\/})]
For every $X\in\calp$ there exists at least one $\ell\in\calb$ such that
$X\in\ell$.

\item[(\emph{iii\/})]
$|\ell\cap\ell'|\le1$ for every $\ell,\ell'\in\calb$, $\ell\neq\ell'$.
\end{itemize}
Note that some authors use slightly different axioms for a semilinear space;
others speak of a \emph{partial linear space\/} instead.
\par
Given two points $X,Y\in\calp$ we write $X\sim Y$, where ``$\sim$'' is to be
read as \emph{collinear}, if there is an $\ell\in\calb$ such that $X,Y\in\ell$.
Otherwise, $X$ and $Y$ are said to be \emph{non-collinear}, $X\not\sim Y$. If
$X\sim Y$ and $X\neq Y$, then the unique line $\ell\in\calb$ such that
$X,Y\in\ell$ is denoted by $X Y$.
\par
Let $F$ be a (commutative or non-commutative) field. We denote by $\PG nF$ the
$n$-dimensional projective space coordinatised by the field $F$ and collect
some basic notions which will be used throughout this article:
\par
If $X$ is a subspace of $\PG nF$ and $\dim X=d$, then $X$ is called a
\emph{$d$-subspace}. Let $U$ and $W$ be two subspaces such that $U\subseteq W$.
The \emph{interval\/} $[U,W]$ is the set of all subspaces of $\PG nF$
containing $U$ and contained in $W$. The set of all $d$-subspaces of $\PG nF$
belonging to $[U,W]$ is denoted by $[U,W]_d$. In particular, for an interval
$[U,W]$ with $\dim U=h-1$ and $\dim W= h+1$ the set $[U,W]_h$ is the
\emph{pencil\/} of $h$-subspaces determined by $U$ and $W$.
\par
Let $[U,W]$ be an interval in $\PG nF$, $\dim U=:k-1$. Then $[U,W]$ carries the
structure of a projective space, whose $d$-dimensional subspaces are precisely
the $(k+d)$-subspaces of $\PG nF$ containing $U$ and contained in $W$. The
projective space $[U,W]$ is isomorphic to $\PG{\dim W-k}F$.
\par
The \emph{$h$-th Grassmannian\/} of $\PG nF$, $0\le h\le n-1$, is the
semilinear space $\Gamma(n,h,F)=(\calp,\calb)$, where $\calp$ is the set of all
$h$-subspaces of $\PG nF$, and $\calb$ is the set of all pencils of
$h$-subspaces. When the field is clear from the context we simply write
$\Gamma(n,h)$. If $\PG nF$ admits a correlation (e.~g., for a commutative field
$F$) then $\Gamma(n,h,F)$ is isomorphic to $\Gamma(n,n-h-1,F)$. In order to
avoid confusion, the elements of $\calp$ and $\calb$ will be called
\emph{G-points\/} and \emph{G-lines}, respectively, where ``G'' abbreviates
``Grassmann''.
\par
We consider the Grassmannian $\Gamma(n,h,F)$ for a fixed $h$. If a set $I$ of
G-points is a $d$-dimensional projective space with respect to the G-lines
contained in $I$, then $I$ is called a \emph{$d$-G-subspace}. A $d$-G-subspace
$I$ is thus a set of $h$-subspaces of $\PG nF$ pairwise meeting in
$(h-1)$-subspaces. Hence there exists an $(h-1)$-subspace $U$ of $\PG nF$ such
that $U\subset X$ for all $X\in I$, or there exists an $(h+1)$-subspace $V$ of
$\PG nF$ such that $X\subset V$ for all $X\in I$; both conditions hold for
$d\le1$.
\par
Next, we describe the maximal G-subspaces of $\Gamma(n,h,F)$. We start by
recalling two notions: If $U$ is an $(h-1)$-subspace of $\PG nF$, the
\emph{star\/} with \emph{centre\/} $U$ is the set of all $h$-subspaces of $\PG
nF$ containing $U$. Consequently, a star is an $(n-h)$-G-subspace. Let $\cals$
denote the set of stars. A \emph{dual star\/} is the set of all $h$-subspaces
of $\PG nF$ contained in a fixed $(h+1)$-subspace. The dual stars are
$(h+1)$-G-subspaces, and the set of all dual stars will be denoted by $\calt$.
For $h=1$ a dual star is a \emph{ruled plane}. If $h=0,n-1$, then $\calp$ is
the only maximal G-subspace. Otherwise, the set of all maximal G-subspaces
partitions into the families $\cals$ and $\calt$.

\section{Linear mappings}

Let $\calp$ and $\calp'$ be sets. We say that $\chi$ is a \emph{partial map\/}
of $\calp$ into $\calp'$, if there is a subset $\mathbb D(\chi)$ of $\calp$
such that $\chi: \mathbb D(\chi)\rightarrow\calp'$ is a map in the usual sense.
The set $\mathbb D(\chi)$ is called the \emph{domain\/} of $\chi$. Let $\mathbb
A(\chi)=\calp\setminus\mathbb D(\chi)$ be the \emph{exceptional subset\/} of
$\chi$. The elements of $\mathbb A(\chi)$ are called \emph{exceptional
points\/} of $\chi$. If $\mathbb A(\chi)=\emptyset$, then $\chi$ is
\emph{global}. We maintain the notation $\chi: \calp\rightarrow\calp'$ even
when $\chi$ is not global.
\par
In order to have a less complicated notation we extend the definition of $\chi$
to the power set of $\calp$ by defining
\begin{equation*}
    \phi^{\chi}:=(\phi\cap\mathbb D(\chi))^{\chi}
    = \{X^\chi\mid X\in \phi\cap\mathbb D(\chi)\}
    \mbox{~for every~} \phi\in 2^\calp.
\end{equation*}
Thus $\chi$ assigns to \emph{every\/} subset of $\calp$ a subset of $\calp'$.
\par
Next, let $\Sigma=(\calp,\calb)$ and $\Sigma'=(\calp',\calb')$ be semilinear
spaces. A partial map $\chi: \calp\rightarrow\calp'$ is called a \emph{linear
mapping\/}, if for each $\ell\in\calb$ one of the following holds:
\begin{itemize}
\item [(\emph{i\/})] $\ell^{\chi}\in\calb'$, and $\chi$ maps $\ell$ bijectively onto
$\ell^{\chi}$.
\item[(\emph{ii\/})] $\ell^{\chi}=\{P'\}$, where $P'\in\calp'$, and
$|\ell\cap\mathbb A(\chi)|=1$.
\item[(\emph{iii\/})] $\ell\subseteq\mathbb A(\chi)$.
\end{itemize}
Obviously, these three conditions are mutually exclusive. This linear mapping
will also be denoted by $\chi: \Sigma\rightarrow\Sigma'$.
\par
If $X, Y$ are distinct collinear points in $\mathbb A(\chi)$, then the line
$XY$ is a subset of $\mathbb A(\chi)$ by (iii). In particular, for a projective
space $\Sigma$ this implies that $\mathbb A(\chi)$ is a subspace of $\Sigma$.
In the case that $\chi$ is global, condition (i) holds for all lines, whence
distinct collinear points have distinct images. However, the images of
non-collinear points may coincide.
\par
There is one type of linear mapping deserving special mention: A \emph{full
projective embedding\/} is a global and injective linear map
$\chi:\Sigma\to\Sigma'$ for which $\Sigma'$ is a projective space.
\par
The linear mappings between projective spaces allow the following explicit
description:

\begin{teor}{\rm\cite{Br73,Ha81}}\label{t:brauner}
Let $\Sigma$ and $\Sigma'$ be projective spaces, and let $\chi:
\Sigma\rightarrow\Sigma'$ be a linear mapping. Then the partial map $\chi$
splits into a projection from $\mathbb A(\chi)$ onto a complementary subspace
in $\Sigma$, say $U$, and a collineation between $U$ and a subspace of
$\Sigma'$.
\end{teor}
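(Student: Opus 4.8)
The plan is to identify the exceptional subspace $A:=\mathbb A(\chi)$ as the centre of the projection and to obtain the required collineation as the restriction of $\chi$ to a suitable complement. Recall from the discussion preceding the statement that $A$ is a subspace of $\Sigma$. If $A=\Sigma$ the assertion is vacuous (then $\mathbb D(\chi)=\emptyset$), so assume $A\neq\Sigma$ and fix a subspace $U$ of $\Sigma$ complementary to $A$, i.e.\ $A\cap U=\emptyset$ and $A\vee U=\Sigma$; such a $U$ exists by the usual basis-extension argument. Since $U\cap A=\emptyset$ we have $U\subseteq\calp\setminus A=\mathbb D(\chi)$, so $\kappa:=\chi|_U$ is a global map on $U$. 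Let $\pi\colon\calp\setminus A\to U$ be the projection with centre $A$, sending $X$ to the unique point $(A\vee X)\cap U$; this point is well defined because $A$ is a hyperplane of $A\vee X$ and $U$ is complementary to $A$. I will show that $\chi=\kappa\circ\pi$ and that $\kappa$ is a collineation of $U$ onto a subspace of $\Sigma'$.

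The central step is the following \emph{collapsing property}: if a line $\ell$ of $\Sigma$ meets $A$ but is not contained in $A$, then $\ell$ falls under case (ii) of the definition of a linear mapping, so every point of $\ell\cap\mathbb D(\chi)$ has one and the same image. Indeed, $\ell$ contains an exceptional point, which rules out (i), and $\ell$ contains a point of $\mathbb D(\chi)$, which rules out (iii); hence (ii) applies and $\ell^\chi$ is a single point. Granting this, fix $X\in\mathbb D(\chi)$ and put $Y:=\pi(X)\in U$. If $X=Y$ then $X^\chi=Y^\chi=\kappa(\pi(X))$ trivially. If $X\neq Y$, then both $X$ and $Y$ lie in $A\vee X$, in which $A$ is a hyperplane; the line $XY$ is not contained in $A$ (as $X\notin A$), so it meets $A$ in a point. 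By the collapsing property $X^\chi=Y^\chi=\kappa(\pi(X))$. This proves $\chi=\kappa\circ\pi$.

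It remains to check that $\kappa$ is a collineation onto a subspace. For any line $m\subseteq U$ we have $m\cap A=\emptyset$, so $m$ is of type (i): $\kappa$ maps $m$ bijectively onto a line $m^\chi$ of $\Sigma'$. In particular distinct points of a line of $U$ have distinct images, and since any two distinct points of $U$ lie on a line of $U$, the map $\kappa$ is injective. The image $U^\kappa$ is a subspace: for distinct $X',Y'\in U^\kappa$ write $X'=\kappa(X)$, $Y'=\kappa(Y)$ with $X\neq Y$ in $U$; then $\kappa$ carries the line $XY$ onto the line through $X'$ and $Y'$, which therefore lies in $U^\kappa$. Finally $\kappa$ reflects collinearity: if $X',Y',Z'\in U^\kappa$ are collinear on a line $\ell'$, let $X,Y,Z\in U$ be their preimages; the line $XY$ is mapped bijectively onto $\ell'$, so the unique point of $XY$ sent to $Z'$ coincides, by global injectivity of $\kappa$, with $Z$, whence $Z\in XY$. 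Thus $\kappa$ is a bijection of $U$ onto the subspace $U^\kappa$ preserving and reflecting lines, i.e.\ a collineation, and the factorisation $\chi=\kappa\circ\pi$ is the desired splitting.

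The main obstacle I anticipate is not the collapsing property---which is immediate from the trichotomy---but the verification that $\kappa$ is genuinely a collineation onto a subspace rather than merely an injective line-preserving map; the argument above hinges on the fact that on each line of $U$ the map is a bijection (type (i)), combined with global injectivity, to pin down preimages. A secondary point requiring care is the existence of a complement $U$ and the well-definedness of $\pi$ in arbitrary, possibly infinite, dimension and over a skew field; these rest on standard facts about projective spaces (complements exist, and a hyperplane meets every line in at least one point), so no genuine difficulty arises there.
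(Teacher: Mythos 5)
The paper itself gives no proof of this theorem: it is quoted verbatim from Brauner \cite{Br73} and Havlicek \cite{Ha81}, so there is no in-paper argument to compare against. Your proof is correct and self-contained, and it is in essence the classical argument behind the cited result: the collapsing property on lines meeting $\mathbb A(\chi)$ in exactly one point gives the factorisation $\chi=\kappa\circ\pi$ through the projection with centre $\mathbb A(\chi)$, while type~(i) behaviour on the lines of a complement $U$ yields global injectivity of $\kappa$, closure of $U^\kappa$ under joins, and reflection of collinearity, so that $\kappa$ is a collineation onto a subspace; your treatment of the degenerate cases ($\mathbb A(\chi)=\Sigma$, or $U$ a point or a line, where a ``collineation'' degenerates to a mere bijection, matching the paper's remark following the theorem) is also in order. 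One hinge deserves emphasis: your step ``$\ell$ contains an exceptional point, which rules out (i)'' tacitly reads case~(i) as requiring $\ell\cap\mathbb A(\chi)=\emptyset$, i.e.\ that the bijection is from all of $\ell$, not merely from $\ell\cap\mathbb D(\chi)$; under the looser reading the theorem would actually be false (a line with one exceptional point whose remaining points biject onto a line of $\Sigma'$ would satisfy the loose (i) yet admit no splitting), so it would strengthen the write-up to note that the strict reading is the intended one --- which the paper's assertions that the three cases are mutually exclusive, and that for global $\chi$ case~(i) holds for all lines, confirm.
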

The linear mappings between Desarguesian projective spaces are --up to one
particular case -- the geometric counterpart of the semilinear maps between
vector spaces; the exceptional points correspond to non-zero vectors in the
kernel. The situation is different if the image of the linear mapping is a
line. Here the collineation from the above theorem is just a bijection which,
of course, need not be induced by a semilinear bijection between the underlying
vector spaces.
\par
We now shortly exhibit full projective embeddings of a Grassmannian
$\Gamma(n,h,F)$. In doing so, the Grassmannians $\Gamma(n,0,F)$ and
$\Gamma(n,n-1,F)$ will be excluded, since they are a priori projective spaces.
Hence we have $0<h<n-1$ which implies $n\ge 3$.
\par
First, assume that $F$ is a non-commutative field. By \cite[Satz~3.6]{Ha81}, no
full projective embedding of $\Gamma(n,h,F)$ exists for $0<h<n-1$.
\par
Next, we assume $F$ to be a commutative field. Let $N={n+1 \choose h+1}-1$ and
\begin{equation*}
    \immplu nh: \Gamma(n,h,F)\rightarrow\PG NF
\end{equation*}
be the map defined by $X^{\immplu nh}=F(v_0\land v_1\land\ldots\land v_h)$,
where $v_0, v_1, \ldots, v_h$ is a basis of $X$ meant as an $(h+1)$-dimensional
subspace of $F^{n+1}$, and $\PG NF$ is represented as $(h+1)$-th exterior power
of $F^{n+1}$. This definition is independent on the choice of the basis, and
$\immplu nh$ is a full projective embedding, called the \emph{Pl\"{u}cker
embedding}. Its image $\calp^{\wp_{n,h}}=\vargra nh$ is a \emph{Grassmann
variety}. Each Grassmann variety is intersection of quadrics
\cite[pp.~184--188]{Se61}; in particular, $\vargra 31$ is the well-known
\emph{Klein quadric\/}.
\par
So, the question remains of describing all full projective embeddings of
$\Gamma(n,h,F)$ when $F$ is commutative. An answer can be given in terms of
linear mappings.

\begin{teor}{\rm\cite{Ha81}}\label{t:havlicek}
Let $F$ be a commutative field and let $\Sigma'$ be a projective space. If
$\chi: \Gamma(n,h,F)\rightarrow\Sigma'$ is a linear mapping, then there exists
a unique linear mapping $\mu: \PG NF\rightarrow\Sigma'$ such that $\chi=\immplu
nh\mu$.
\end{teor}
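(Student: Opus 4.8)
The plan is to exploit that $\immplu nh$ is a full projective embedding whose image is the Grassmann variety $\vargra nh$, and that $\vargra nh$ linearly spans $\PG NF$. Indeed, after fixing a basis $e_0,\dots,e_n$ of $F^{n+1}$, the $N+1$ coordinate $h$-subspaces $E_I=\langle e_{i_0},\dots,e_{i_h}\rangle$, $I=\{i_0<\cdots<i_h\}$, are sent by $\immplu nh$ to the vertices of the coordinate simplex of $\PG NF$, so their images alone already form a basis. Because $\immplu nh$ is injective, I can first define $\mu$ on the point set $\vargra nh$ by $(X^{\immplu nh})^\mu:=X^\chi$, declaring $X^{\immplu nh}$ exceptional for $\mu$ exactly when $X\in\mathbb A(\chi)$. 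The entire content of the theorem is then that this prescription on $\vargra nh$ extends, and extends in only one way, to a linear mapping on all of $\PG NF$.

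Uniqueness I would settle first. A linear mapping of $\PG NF$ is already determined by its restriction to the spanning set $\vargra nh$: the variety contains a projective basis, and, adjoining one decomposable multivector all of whose Pl\"{u}cker coordinates are nonzero (the image of an $h$-subspace in sufficiently general position), one even finds a full projective frame lying on $\vargra nh$. Two linear extensions agreeing on such a frame must coincide by Theorem~\ref{t:brauner}. The only delicate point is the existence of that unit point on the variety; it may fail over very small fields, but there the relevant field automorphism is already trivial, so that agreement on a basis suffices, and these cases are disposed of directly.

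For existence I would realise $\mu$ through a $\sigma$-semilinear map $f\colon\Lambda^{h+1}F^{n+1}\to V'$, where $V'$ coordinatises $\Sigma'$ and the field homomorphism $\sigma$ is still to be pinned down. On the basis of decomposables $D_I=e_{i_0}\wedge\cdots\wedge e_{i_h}$ I set $f(D_I)$ to be a representative of $E_I^\chi$, which determines $f$ on all of $\Lambda^{h+1}F^{n+1}$ once $\sigma$ is chosen, and the projectivisation $\mu:=[f]$ is then a linear mapping in the sense of Theorem~\ref{t:brauner}. The one identity to be verified is $\chi=\immplu nh\mu$, that is, that $f$ reproduces $X^\chi$ on \emph{every} decomposable vector and not merely on the basis ones. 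The mechanism is that a pencil of $h$-subspaces becomes, in Pl\"{u}cker coordinates, an honest line of $\PG NF$: if its members share the $(h-1)$-subspace with decomposable $\omega$ and are spanned by $\omega$ together with $tv+sw$, their images run through the points $\langle t(\omega\wedge v)+s(\omega\wedge w)\rangle$. Thus conditions (\emph{i\/})--(\emph{iii\/}) imposed on $\chi$ along each G-line translate precisely into the $\sigma$-semilinear additivity of $f$ along these two-dimensional subspaces, and one propagates agreement from the coordinate simplex to an arbitrary decomposable along chains of pencils.

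The crux, and the step I expect to be hardest, is exactly this coherence: showing that the pencil-by-pencil data patch into one \emph{globally} consistent semilinear map, with a single automorphism $\sigma$ valid throughout. Equivalently, one must show that every linear relation among Pl\"{u}cker vectors is a consequence of the three-term relations carried by the pencils, which is where the rich supply of G-lines through each G-point and the recursive simplex structure of the coordinate subspaces have to be used, presumably by an induction over the coordinate points. Two loose ends then remain: one must check that no point of $\vargra nh$ becomes exceptional for $\mu$ beyond the forced images of $\mathbb A(\chi)$, and one must treat separately the degenerate alternative in Theorem~\ref{t:brauner}, where $\chi$ has a line as its image and the collineation part is a bare bijection, so that $\mu$ is assembled from a hyperplane projection and a bijection rather than from a semilinear map.
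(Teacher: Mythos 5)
The paper does not prove Theorem~\ref{t:havlicek} at all: it quotes it from \cite{Ha81}, so your attempt has to stand on its own, and it does not. Your existence argument has the right skeleton --- pass to $\Lambda^{h+1}F^{n+1}$, prescribe $f$ on the basis decomposables $D_I$, and propagate along pencils, whose $\immplu nh$-images are genuine lines of $\PG NF$ --- but it stops exactly where the theorem begins. The step you yourself isolate as the crux, that the pencil-by-pencil data glue into one globally consistent semilinear map with a single automorphism $\sigma$ (equivalently, that every linear relation among Pl\"{u}cker vectors of decomposables follows from the three-term pencil relations), \emph{is} the content of the theorem, and for it you offer only ``presumably by an induction over the coordinate points''. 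Nothing in your text pins down $\sigma$, excludes that distinct pencils induce distinct automorphisms, or fixes the scalar ambiguity in the representatives $f(D_I)$ (each $E_I^\chi$ determines $f(D_I)$ only up to $F^\times$; normalising coherently is again the unit-point problem you concede may be unsolvable on $\vargra nh$ over small fields). Moreover, the ``degenerate alternative'' you postpone is not a loose end. When the image of $\chi$ is a single point, the required $\mu$ must by Theorem~\ref{t:brauner} have a hyperplane as exceptional set, so this special case of the theorem is precisely Proposition~\ref{p:primi} --- the nontrivial fact that every prime of an embeddable Grassmannian is a hyperplane section, cf.\ \cite{Sh92} --- and when the image is a line, the collineation part in Theorem~\ref{t:brauner} is a bare bijection not induced by any semilinear map, so your entire framework is inapplicable there; both cases need their own constructions.

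Your uniqueness argument is flawed as well, with the field-size caveat inverted. A projective frame determines a projectivity, not a collineation, hence not a linear mapping: on $\PG 1{\mathbb C}$ the identity and $z\mapsto\bar z$ agree at $0$, $1$, $\infty$, and the same failure occurs in every dimension over any field with a nontrivial automorphism, e.g.\ $\mathbb F_{p^k}$ with $k>1$. So the problematic fields are the large ones, while the ``very small fields'' you worry about are exactly the harmless ones, having trivial automorphism group. What can rescue uniqueness is that $\mu_1$ and $\mu_2$ agree pointwise on \emph{all} of $\vargra nh$, which contains entire lines of $\PG NF$: pointwise agreement along full, mutually intersecting lines pins down the companion automorphism and then forces equality on spans, after which line-connectedness and the spanning property of the variety propagate it everywhere. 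Even then you must separately prove that the exceptional subspaces coincide; note that $\mathbb A(\mu)$ is not simply read off from $\mathbb A(\chi)$ --- as the paper remarks immediately after the theorem, $\chi$ can be a full embedding while $\mu$ is a nontrivial projection from a centre avoiding all secants of $\vargra nh$ --- so $\mathbb A(\mu)$ has to be recovered via the splitting of Theorem~\ref{t:brauner}, a step your proposal does not address.
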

By this universal property, we may obtain all full projective embeddings of
$\Gamma(n,h,F)$ (to within collineations) as a product of $\immplu nh$ by a
(possibly trivial) projection whose centre does not meet any secant of the
Grassmann variety $\vargra nh$. Therefore, each fully embedded Grassmannian is
a (possibly trivial) projection of a Grassmann variety $\vargra nh$. In
\cite{We83} and \cite{Za95} sufficient conditions for the
%%%%%%%%%%%%%%%%%%%%%%%%%%%%%%%%%%%%%%%%%%%%%%%%%%%%%%
\mbox{(non-)}\hspace{0pt}%%falsche Abteilung vermeiden
%%%%%%%%%%%%%%%%%%%%%%%%%%%%%%%%%%%%%%%%%%%%%%%%%%%%%%
existence of a non-trivial projection are given.
\par
For other questions and literature on linear mappings the reader is referred to
the book \cite{Fa00} (different terminology), \cite{Br73} (including a survey
of older literature), and \cite{Ha94}.

\section{Primes and linear complexes}\label{se:complexes}

A \emph{prime\/} of a semilinear space $(\calp,\calb)$ is a proper subset $L$
of $\calp$, such that for each $\phi\in\calb$ either $\phi\subseteq L$, or
$|\phi\cap L|=1$. Note that some authors call such a subset a \emph{geometric
hyperplane}.
\par
From now on $\Gamma(n,h,F)=(\calp,\calb)$ is a Grassmannian. It is easily seen
that the set of all $h$-subspaces having non-empty intersection with a fixed
$(n-h-1)$-subspace is a prime of $\Gamma(n,h,F)$. Somewhat surprisingly,
$\Gamma(n,h,F)$ has no other primes, if $F$ is a non-commutative field. This
beautiful result can be found in \cite{HaSh93}.
\par
Next, let $F$ be a commutative field and $\immplu nh:
\Gamma(n,h,F)\rightarrow\PG NF$ the related Pl\"{u}cker embedding. If $H$ is a
hyperplane of $\PG NF$, then
\begin{equation*}
    K=(H\cap\vargra nh)^{\immplu nh^{-1}}
\end{equation*}
is called a \emph{linear complex of $h$-subspaces\/} in $\PG nF$. Cf., e.~g.\
\cite[p.~322]{Bu61}. If $\varphi$ is a G-line, then either $\varphi\subseteq
K$, or $|\varphi\cap K|=1$. Since $\vargra nh$ generates $\PG NF$, we have
$K\neq\calp$. Thus each linear complex is a prime of $\Gamma(n,h,F)$.
Conversely, we have the following result; its short proof is taken from
\cite[p.~179]{Ha81}:
\begin{prop}\label{p:primi}
If $F$ is commutative, then each prime of\/ $\Gamma(n,h,F)$ is a linear
complex.
\end{prop}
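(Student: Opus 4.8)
The plan is to use the universal property of the Plücker embedding (Theorem~\ref{t:havlicek}) to reduce the statement about primes to a statement about hyperplanes in $\PG NF$. First I would observe that a prime $L$ of $\Gamma(n,h,F)$ is, by definition, a proper subset of $\calp$ such that every G-line $\varphi$ satisfies either $\varphi\subseteq L$ or $|\varphi\cap L|=1$. The key idea is to encode the set-theoretic data of $L$ as a \emph{linear mapping} to a suitable small projective space, so that Theorem~\ref{t:havlicek} can factor it through $\immplu nh$.

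Concretely, I would let $\Sigma'$ be the projective line $\PG1F$, fix two distinct points of it, and define a partial map $\chi:\Gamma(n,h,F)\to\PG1F$ whose exceptional set $\mathbb A(\chi)$ is exactly $L$, and which sends every G-point outside $L$ to one fixed point of $\PG1F$. I would then verify that $\chi$ is a linear mapping in the sense of the three-case definition: for a G-line $\varphi$ with $\varphi\subseteq L$ we are in case (\emph{iii\/}); for a G-line meeting $L$ in a single G-point we are in case (\emph{ii\/}), since $\varphi^\chi$ is a single point and $|\varphi\cap\mathbb A(\chi)|=1$; and the defining property of a prime guarantees no other configuration occurs, so case (\emph{i\/}) never arises. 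Thus the prime axiom is precisely what makes $\chi$ linear.

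Having this, Theorem~\ref{t:havlicek} furnishes a unique linear mapping $\mu:\PG NF\to\PG1F$ with $\chi=\immplu nh\mu$. By Theorem~\ref{t:brauner} applied to $\mu$, the exceptional set $\mathbb A(\mu)$ is a subspace of $\PG NF$, and since the image of $\mu$ is a line (not a point), $\mathbb A(\mu)$ must be a hyperplane $H$ of $\PG NF$. Pulling back, a G-point $X$ lies in $L=\mathbb A(\chi)$ exactly when $X^{\immplu nh}\in H$, i.e.\ when $X^{\immplu nh}\in H\cap\vargra nh$. Therefore $L=(H\cap\vargra nh)^{\immplu nh^{-1}}$, which is by definition a linear complex.

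The main obstacle I expect is the careful verification that $\chi$ genuinely satisfies the linear-mapping axioms, and in particular handling the degenerate possibilities: one must check that $L$ cannot be empty (else $\chi$ is globally constant and the factorisation would force $H=\PG NF$, contradicting properness) and that $L\neq\calp$ is exactly the properness built into the definition of a prime. A secondary subtlety is confirming that $\mathbb A(\mu)$ is a genuine hyperplane rather than a subspace of smaller dimension; this uses that $\mu$ has a line as image together with the projection-plus-collineation structure from Theorem~\ref{t:brauner}, so the complementary subspace $U$ to $\mathbb A(\mu)$ must be a single point and hence $\mathbb A(\mu)$ has codimension one. Once these edge cases are dispatched, the identification $L=(H\cap\vargra nh)^{\immplu nh^{-1}}$ is immediate from the factorisation.
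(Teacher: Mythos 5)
Your construction and overall route are the same as the paper's: the paper also encodes the prime $L$ as the exceptional set of a linear mapping $\chi$ (taking $\Sigma'$ to be a \emph{single point} rather than $\PG 1F$), invokes Theorem~\ref{t:havlicek} to factor $\chi=\immplu nh\mu$, and reads off $L=\left(\mathbb A(\mu)\cap\vargra nh\right)^{\immplu nh^{-1}}$ with $\mathbb A(\mu)$ a hyperplane by Theorem~\ref{t:brauner}. Your verification that the prime axiom is precisely the linearity of $\chi$, and the pullback identification at the end, are both fine. (Incidentally, $L=\emptyset$ needs no separate treatment: the prime condition forces every pencil to meet $L$, and note also that a globally constant $\chi$ would not even be linear under the paper's definition, so Theorem~\ref{t:havlicek} could not be applied to it.)

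The genuine flaw is in your justification of the decisive step, namely that $\mathbb A(\mu)$ is a hyperplane, and it is wrong in two ways. First, in your own construction the image of $\mu$ is \emph{not} a line: every point of $\vargra nh$ outside $\mathbb A(\mu)$ is sent to the one fixed point $P'\in\PG 1F$, and since $\vargra nh$ generates $\PG NF$, the image of $\mu$ is the single point $P'$. Second, the inference ``image a line $\Rightarrow$ complement $U$ a point $\Rightarrow$ $\mathbb A(\mu)$ a hyperplane'' is backwards: the collineation in Theorem~\ref{t:brauner} maps $U$ bijectively onto the image of $\mu$, so if the image were a line then $U$ would be a line and $\mathbb A(\mu)$ would have codimension \emph{two}, defeating your conclusion. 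The correct argument runs the other way: write $\mu$ as projection from $C=\mathbb A(\mu)$ onto a complement $U$ followed by a collineation $\sigma$; injectivity of $\sigma$ forces the projections of all non-exceptional points of $\vargra nh$ to be the single point $Q={P'}^{\sigma^{-1}}$, so $\vargra nh\subseteq C\vee Q$, and since the Grassmann variety spans $\PG NF$ this gives $\dim C\ge N-1$, while properness of $L$ yields $C\neq\PG NF$; hence $C$ is a hyperplane. The paper sidesteps this entirely by choosing $\Sigma'$ to be a point, so that the image of $\mu$ is trivially zero-dimensional; your $\PG 1F$ variant works equally well, but it is exactly this extra argument that it demands, and that your write-up gets wrong. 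With that step repaired, your proof coincides with the paper's.
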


\begin{proof}
Let $L$ be a prime of $\Gamma(n,h,F)$. Define a partial map $\chi:
\Gamma(n,h,F)\rightarrow\Sigma'$, where $\Sigma'$ is a point, by setting
$\mathbb A(\chi)=L$. Obviously, this $\chi$ is linear. By
Theorem~\ref{t:havlicek}, $\chi=\immplu nh\mu$, where $\mu: \PG
NF\rightarrow\Sigma'$ is a linear mapping. Then
\begin{equation*}
    X\in L\ \Leftrightarrow\ X\in\mathbb A(\chi)\
    \Leftrightarrow\
    X^{\immplu nh}\in\mathbb A(\mu),
\end{equation*}
and the latter exceptional set is a hyperplane by
Theorem~\ref{t:brauner}.
\end{proof}

In \cite{Sh92} a self-contained proof of Proposition~\ref{p:primi} is given.

\section{Generalised polarities arising from a linear complex}\label{se:polaritaet}

Up to the end of the paper $F$ denotes a \emph{commutative\/} field.
\par
Our first aim is to generalise the concept of polarity. Suppose that
$\chi:\Gamma(n,k) \rightarrow \PG nF^*$, $0\le k\le n-1$, is linear. So, $\chi$
is a partial map of the set of $k$-subspaces of $\PG nF$ into its hyperplane
set. We say that such a $\chi$ is a (\emph{generalised\/}) \emph{polarity\/} if
for all $U_1,U_2\in\Gamma(n,k)$ with $U_1\sim U_2$ the following holds:
\begin{equation}\label{e:polarita}
    U_1\subseteq U_2^\chi \mbox{ implies } U_2\subseteq U_1^\chi.
\end{equation}
In (\ref{e:polarita}) it is understood that for $U\in\mathbb A(\chi)$,
$U^\chi=\{U\}^\chi$ is the whole projective space. When speaking of
``polarities'' below, we always mean ``generalised polarities''.
\par
For $k=0$ our definition is in accordance with the usual definition of a
(possibly degenerate) polarity. Likewise, the following result is well known
for $k=0$ in the non-degenerate case. See, e.~g.\ \cite[p.~110,
Cor.~1]{Baer52}: \emph{``Null systems are polarities''}.

\begin{teor}\label{t:nullpol}
Let $\chi: \Gamma(n,k)\rightarrow\PG nF^*$, $0\le k\le n-1$, be a linear
mapping, and assume that $\chi$ satisfies the\/ \emph{null property}
\begin{equation}\label{e:nullprop}
    U\subseteq U^\chi \mbox{~~for each~~}U\in\mathbb D(\chi).
\end{equation}
Then $\chi$ is a polarity.
\end{teor}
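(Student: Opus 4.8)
The plan is to localise the polarity condition to a single pencil and then run through the three possible behaviours of $\chi$ on that pencil. First I would dispose of the trivial case $U_1=U_2$ and otherwise pass to the unique pencil $\ell=[U_1\cap U_2,\,U_1\vee U_2]_k$ containing $U_1$ and $U_2$; write $U:=U_1\cap U_2$ (dimension $k-1$) for its base and $W:=U_1\vee U_2$ (dimension $k+1$) for its top. Fixing a line $m\subseteq W$ complementary to $U$, the G-points of $\ell$ are exactly the subspaces $U\vee\langle p\rangle$ with $p$ running through the points of $m$, and $X\mapsto X\cap m$ identifies $\ell$ with $m$. Since \eqref{e:polarita} restricted to $\ell$ asserts precisely that the relation ``$X\subseteq Y^\chi$'' on $\ell$ is symmetric, and since every collinear pair lies in such an $\ell$, it suffices to establish this symmetry pencil by pencil.

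The next step treats the two degenerate behaviours of $\chi$ on $\ell$. If $\ell\subseteq\mathbb A(\chi)$ (condition (iii)), then $X^\chi$ is the whole space for every $X\in\ell$ and the relation is total, hence symmetric. If $\ell$ is of type (ii), so that $\ell^\chi=\{P'\}$ with a single exceptional G-point $E\in\ell$, then for each non-exceptional $X$ the null property \eqref{e:nullprop} gives $X\subseteq X^\chi=P'$; as $|\ell|=|F|+1\ge 3$ there are at least two such $X$, and any two of them span $W$, whence $W\subseteq P'$. Consequently every $X^\chi$ (equal to $P'$ or, for $X=E$, to the whole space) contains $W$, so $X\subseteq Y^\chi$ holds for all $X,Y\in\ell$ and symmetry is clear. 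I expect these two cases to be routine.

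The essential case is (i), where $\chi$ maps $\ell$ bijectively onto a pencil $\ell^\chi$ of hyperplanes, all passing through a fixed $(n-2)$-subspace $S=\bigcap_{X\in\ell}X^\chi$; here every G-point of $\ell$ lies in $\mathbb D(\chi)$, so \eqref{e:nullprop} applies throughout and yields $U\subseteq X\subseteq X^\chi$ for all $X$, hence $U\subseteq S$. For $X=U\vee\langle p\rangle$ with $p\notin S$ one then gets $X^\chi\supseteq X\vee S=S\vee\langle p\rangle$, and as the right-hand side is already a hyperplane this forces $X^\chi=S\vee\langle p\rangle$. The behaviour of $\ell$ is therefore governed by the position of $m$ relative to $S$, and I would split according to $m\cap S$: if $m\subseteq S$ then $W=U\vee m\subseteq S\subseteq X^\chi$ for every $X$, so the relation is total; if $m\cap S=\emptyset$ then $S\vee m$ is the whole space, no image $S\vee\langle p\rangle$ can contain $m$, so no $X$ contains a second G-point of $\ell$ in its image and the relation is the diagonal. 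Both alternatives are symmetric.

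The main obstacle is the intermediate position $m\cap S=\{r\}$, which is exactly the configuration that would break symmetry: a dimension count gives $\dim(S\vee m)=n-1$, so $H_0:=S\vee m$ is a hyperplane, precisely one G-point of $\ell$ would have $W$ inside its image, and all others would not, an asymmetric pattern. The point of the argument is that this situation is incompatible with type (i): for every point $p\in m\setminus\{r\}$ one has $p\notin S$ and $S\vee\langle p\rangle\subseteq S\vee m=H_0$ with both sides hyperplanes, so $X^\chi=H_0$ for all such $p$. Since $|m\setminus\{r\}|=|F|\ge 2$, at least two distinct G-points of $\ell$ would share the image $H_0$, contradicting the bijectivity required in (i). Thus the intermediate case cannot occur, only the symmetric alternatives remain, and \eqref{e:polarita} follows in every pencil. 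This injectivity argument ruling out $m\cap S=\{r\}$ is the one delicate verification, and the place where linearity is used beyond the null property.
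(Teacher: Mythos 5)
Your proof is correct, and while it shares the unavoidable skeleton with the paper's argument --- reduce (\ref{e:polarita}) to symmetry of the relation $X\subseteq Y^\chi$ on each pencil and distinguish the three linearity alternatives --- its treatment of the essential case (i) is genuinely different. The paper never computes images and never invokes the bijectivity clause of linearity: it first records the propagation statement (\ref{e:(a)}), that a point lying in the polars of two members of a pencil lies in the polar of every member (valid in all three cases at once), and then, picking two auxiliary members $U_1',U_2'$ of the pencil distinct from $U_1$, combines the hypothesis $U_1\subseteq U_2^\chi$ with the null property to get $U_1\vee U_2=U_i'\vee U_1\subseteq {U_i'}^\chi$, whence a second application of (\ref{e:(a)}) gives $U_2\subseteq U_1\vee U_2\subseteq U_1^\chi$. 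You instead make the type-(i) images explicit --- $U\subseteq S$ and $X^\chi=S\vee\langle p\rangle$ for $p\notin S$ --- and classify by the position of the auxiliary line $m$ relative to the axis $S$, eliminating the only potentially asymmetric configuration $|m\cap S|=1$ through injectivity of $\chi$ on the pencil. Your route is longer but yields a sharper structural statement: on every pencil the conjugacy relation is either total or the diagonal, and off the axis the restriction of $\chi$ is the perspectivity $p\mapsto S\vee\langle p\rangle$; the paper's uniform argument obtains the totality only implicitly (its conclusion is $U_1\vee U_2\subseteq U^\chi$ for \emph{all} members) and is indifferent to whether $\chi$ is injective on the pencil. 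One inessential slip: in your heuristic description of the excluded case $m\cap S=\{r\}$ the asymmetry actually runs the other way --- all members $U\vee\langle p\rangle$ with $p\neq r$ would have $W$ inside their common image $H_0$, while the member through $r$ need not --- but nothing rests on this, since you rule the configuration out outright. Both proofs use $|F|\ge 2$, i.e.\ at least three G-points per pencil: the paper to choose $U_1',U_2'$, you to find two points of $m\setminus\{r\}$ in case (ii) and in the injectivity argument.
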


\begin{proof}
Assume that $U_1$ and $U_2$ are distinct $k$-subspaces such that $U_1\sim U_2$
in $\Gamma(n,k)$. By the linearity of $\chi$, one of the following holds:
(\emph{i\/})~$U_1,U_2\in\mathbb D(\chi)$ and $U_1^\chi\neq U_2^\chi$; then
$(U_1 U_2)^\chi$ is the pencil of hyperplanes through $U_1^\chi\cap U_2^\chi$.
(\emph{ii\/})~There is a unique $U_0\in U_1 U_2$ such that $U_0\in\mathbb
A(\chi)$, and a hyperplane $E$ of $\PG nF$ exists such that $U\in U_1 U_2$,
$U\neq U_0$ implies $U^\chi=E$. (\emph{iii\/})~$U_1,U_2\in\mathbb A(\chi)$ so
that $U_1 U_2\subseteq\mathbb A(\chi)$.
\par
Let $P$ be a point. In any case we have
\begin{equation}\label{e:(a)}
    P\in U_1^\chi\cap U_2^\chi \Rightarrow (P\in U^\chi \mbox{~~for all~~}U\in U_1
    U_2).
\end{equation}
In order to prove (\ref{e:polarita}), assume that the above subspaces satisfy
$U_1\subseteq U_2^\chi$. Since $U_1\subseteq U_1^\chi$, by (\ref{e:(a)}) we
have, for each $U\in U_1 U_2$, that $U_1\subseteq U^\chi$. On the other hand,
$U\subseteq U^\chi$ implies $(U\vee U_1)\subseteq U^\chi$, where $U\vee U_1$
denotes the join in $\PG nF$. Now take $U_1',U_2'\in U_1 U_2$ with $U_1'\neq
U_2'\neq U_1\neq U_1'$. For $i=1,2$ we obtain
\begin{equation*}
  U_1\vee U_2=U_i'\vee U_1\subseteq {U_i'}^\chi.
\end{equation*}
So, (\ref{e:(a)}) yields $U_1\vee U_2\subseteq U_1^\chi$, and in particular
$U_2\subseteq U_1^\chi$.
\end{proof}

Taking into account the above theorem, a linear mapping
$\Gamma(n,k)\rightarrow\PG nF^*$ with the null property (\ref{e:nullprop}) will
be called a (\emph{generalised}) \emph{null polarity}.
\par
Now we turn back to linear complexes:

\begin{prop}\label{p:derivato}
Let $K$ be a linear complex of $h$-subspaces of\/ $\Sigma=\PG nF$, $1\le h\le
n-1$, and let $[U,W]$ be an interval of\/ $\Sigma$ with $\dim U\le h-1$ and
$\dim W\ge h+1$. Then
\begin{equation*}
    K(U,W) := K \cap [U,W]_h,
\end{equation*}
i.e., the set consisting of all elements of $K$ containing $U$ and contained in
$W$, is a linear complex of $(h-1-\dim U)$-subspaces in the projective space
$[U,W]$, unless $[U,W]_h\subseteq K$.
\end{prop}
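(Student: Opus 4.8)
The plan is to reduce everything to the characterisation of linear complexes as primes, namely Proposition~\ref{p:primi} together with the fact, noted in Section~\ref{se:complexes}, that every linear complex is a prime. Write $k:=\dim U+1$ and $d:=h-1-\dim U$. Since $\dim U\le h-1$ and $\dim W\ge h+1$, the hypotheses give $0\le d\le(\dim W-k)-1$, so that the $h$-subspaces of $\Sigma$ lying in the interval $[U,W]$ are precisely the $d$-subspaces of the projective space $[U,W]\cong\PG{\dim W-k}F$. Let $\Gamma'$ denote the Grassmannian of these $d$-subspaces of $[U,W]$; its G-point set is exactly $[U,W]_h$. First I would show that $K(U,W)$ is a prime of $\Gamma'$, and then invoke Proposition~\ref{p:primi}, applied to the projective space $[U,W]$ over the same commutative field $F$, to conclude that $K(U,W)$ is a linear complex of $d$-subspaces in $[U,W]$.

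The key step is to match up the G-lines. A G-line of $\Gamma'$ is a pencil of $d$-subspaces of $[U,W]$, determined by a $(d-1)$-subspace and a $(d+1)$-subspace of $[U,W]$; under the interval correspondence these are an $(h-1)$-subspace $A$ and an $(h+1)$-subspace $B$ of $\Sigma$ with $U\subseteq A\subset B\subseteq W$, and the pencil itself is the G-line $[A,B]_h$ of $\Gamma(n,h,F)$, which is contained in $[U,W]_h$. Conversely, I would argue that a G-line $[A,B]_h$ of $\Gamma(n,h,F)$ contained in $[U,W]_h$ already arises this way: the intersection of all members of the pencil is $A$ and their join is $B$, so $[A,B]_h\subseteq[U,W]_h$ forces $U\subseteq A\subset B\subseteq W$. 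Hence the G-lines of $\Gamma'$ are exactly the G-lines of $\Gamma(n,h,F)$ that are entirely contained in $[U,W]_h$.

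With this correspondence the prime property transfers directly. Let $\varphi$ be any G-line of $\Gamma'$. Since $\varphi\subseteq[U,W]_h$ we have $\varphi\cap K(U,W)=\varphi\cap K$, and because $K$ is a linear complex, hence a prime of $\Gamma(n,h,F)$, either $\varphi\subseteq K$ or $|\varphi\cap K|=1$. Thus for each G-line $\varphi$ of $\Gamma'$ we obtain $\varphi\subseteq K(U,W)$ or $|\varphi\cap K(U,W)|=1$. The remaining requirement for a prime is that $K(U,W)$ be a proper subset of its G-point set $[U,W]_h$; but $K(U,W)=[U,W]_h$ means precisely $[U,W]_h\subseteq K$, which is the excluded case. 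Therefore, under the hypothesis $[U,W]_h\not\subseteq K$, the set $K(U,W)$ is a prime of $\Gamma'$, and Proposition~\ref{p:primi} finishes the argument.

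The routine but indispensable part is the dimension bookkeeping of the first paragraph and the G-line matching of the second; the only place demanding a little care is the converse direction there, where one recovers $A$ and $B$ as the intersection and the join of the members of a pencil in order to see that every pencil of $\Gamma(n,h,F)$ lying in $[U,W]_h$ is genuinely a pencil of $\Gamma'$. Once this identification of the two line sets is in place, no computation with Grassmann coordinates or with the Pl\"ucker embedding is needed, since the passage from \emph{prime} back to \emph{linear complex} is supplied wholesale by Proposition~\ref{p:primi}.
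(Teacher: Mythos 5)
Your proof is correct and takes essentially the same route as the paper's: observe that a pencil of $(h-1-\dim U)$-subspaces of the projective space $[U,W]$ is a pencil of $h$-subspaces of $\Sigma$, transfer the prime condition from $K$ to $K(U,W)$ (with $[U,W]_h\not\subseteq K$ giving properness), and invoke Proposition~\ref{p:primi}. The only difference is that you also prove the converse of the pencil correspondence (every G-line of $\Gamma(n,h,F)$ inside $[U,W]_h$ comes from a pencil of $[U,W]$), which is harmless but unnecessary, since the prime property of $K(U,W)$ only requires the forward direction.
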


\begin{proof}
A pencil $\varphi$ of $(h-1-\dim U)$-subspaces with respect to the projective
space $[U,W]$ is a pencil of $h$-subspaces in $\Sigma$; from $\varphi\subseteq
K$ or $|\varphi\cap K|=1$ we have $\varphi\subseteq K(U,W)$ or $|\varphi\cap
K(U,W)|=1$, respectively. Therefore, if $[U,W]_h\not\subseteq K$, then $K(U,W)$
is a prime, and hence a linear complex, of the $(h-1-\dim U)$-th Grassmannian
of $[U,W]$.
\end{proof}
\par
As a particular case of the above proposition, if $P$ is a point of $\Sigma$,
then the set $K_P=K(P,\Sigma)$, consisting of all elements of $K$ incident with
$P$, is a linear complex of $(h-1)$-subspaces in the $(n-1)$-dimensional
projective space $[P,\Sigma]$, unless $K_P=[P,\Sigma]_h$.
\par
As a further consequence, if $S=[U,\Sigma]_h$ is the star with centre an
$(h-1)$-subspace $U$, then either (\emph{i\/})~$S\subseteq K$, or
(\emph{ii\/})~there is a hyperplane $E$ of $\PG nF$ such that for each $X\in
S$, we have $X\in K$ if, and only if, $X\subseteq E$. If (i) holds, then $U$ is
called a \emph{singular $(h-1)$-subspace\/} of $K$; otherwise $E$ is the
\emph{polar hyperplane\/} of $U$. Similarly, if $T=[\emptyset,V]_h$, $\dim
V=h+1$, is a dual star, then either (\emph{i\/})~$T\subseteq K$, or
(\emph{ii\/})~there is a point $P$ of $\PG nF$ such that for each $X\in T$, we
have $X\in K$ if and only if $P\in X$. If (i) holds, then $V$ is called a
\emph{total $(h+1)$-subspace\/} of $K$; otherwise $P$ is the \emph{pole\/} of
$V$.
\par
We are now in a position to introduce the following crucial notion. If $K$ is a
linear complex of $h$-subspaces, $1\le h\le n-1$, we will denote by ${\uparrow}
K$ the partial map of the set of all $(h-1)$-subspaces of $\PG nF$ into the
dual projective space $\PG nF^*$, defined as follows: given an
$(h-1)$-subspace, say $U$, if $U$ is singular, then $U\in\mathbb A(\posu K)$;
otherwise $U^{\posu K}$ is the polar hyperplane of $U$, i.e., the union of all
elements of $K$ containing $U$. In view of Theorem~\ref{t:upolarity} below,
this mapping $\posu K$ will be called the \emph{null polarity defined by $K$}.

\begin{teor}\label{t:upolarity}
Let $K$ be a linear complex of $h$-subspaces in $\Sigma=\PG nF$, $1\le h\le
n-1$. Then the following assertions hold:
\begin{itemize}
\item[\rm(a)]
The partial map $\posu K: \Gamma(n,h-1)\rightarrow\PG nF^*$ is a null polarity
with non-empty domain.
\item[\rm(b)]
The image of $\posu K$ generates a subspace of $\PG nF^*$ with dimension at
least $h$.
\end{itemize}
\end{teor}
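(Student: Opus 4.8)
The plan is to make the map $\posu K$ completely explicit by means of the alternating form that defines $K$, and then to read off both assertions from this description. Represent the Pl\"ucker target $\PG NF$, $N=\binom{n+1}{h+1}-1$, as the projectivised $(h+1)$-th exterior power of $F^{n+1}$, and let the hyperplane $H$ cutting out $K$ correspond to a non-zero alternating $(h+1)$-linear form $\omega$ on $F^{n+1}$; thus an $h$-subspace $X=\langle v_0,\dots,v_h\rangle$ lies in $K$ exactly when $\omega(v_0,\dots,v_h)=0$. For an $(h-1)$-subspace $U=\langle u_1,\dots,u_h\rangle$ I would introduce the contracted linear form $\lambda_U\colon w\mapsto\omega(u_1,\dots,u_h,w)$ on $F^{n+1}$, which by multilinearity is well defined up to a non-zero scalar.

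For part (a) I would first identify $\posu K$ with $U\mapsto[\lambda_U]$. Since $X=U\vee\langle w\rangle$ belongs to $K$ iff $\lambda_U(w)=0$, the star $[U,\Sigma]_h$ lies in $K$ precisely when $\lambda_U=0$ (the singular case); otherwise $\ker\lambda_U$ is a hyperplane whose projectivisation is the union of all elements of $K$ through $U$ --i.e.\ $U^{\posu K}$-- because any point of that hyperplane outside $U$ spans with $U$ an $h$-subspace in $K$. This matches the geometric definition of $\posu K$ and, since $\omega$ is alternating so that $\lambda_U$ vanishes on $U$, it gives the null property $U\subseteq U^{\posu K}$ of (\ref{e:nullprop}) for free. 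For linearity, take a pencil $\varphi=[A,B]_{h-1}$ with $\dim A=h-2$, $\dim B=h$; fixing a basis $a_1,\dots,a_{h-1}$ of $A$ and $b,c$ with $B=\langle a_1,\dots,a_{h-1},b,c\rangle$, every $U\in\varphi$ satisfies $\lambda_U=\beta\mu+\gamma\nu$ with $\mu=\omega(a_1,\dots,a_{h-1},b,\cdot)$, $\nu=\omega(a_1,\dots,a_{h-1},c,\cdot)$ and $[\beta:\gamma]\in\PG1F$. The three possibilities --$\mu,\nu$ linearly independent, dependent but not both zero, both zero-- give respectively case~(i) (a bijection of $\varphi$ onto a line of $\PG nF^*$), case~(ii) (image a single point, with one exceptional $U_0$), and case~(iii) of the definition of a linear mapping. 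Hence $\posu K$ is linear, and Theorem~\ref{t:nullpol} turns it into a null polarity; the domain is non-empty because $\lambda_U\equiv0$ for every $U$ would force $\omega=0$.

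For part (b) let $V^*\subseteq(F^{n+1})^*$ be the linear span of all the $\lambda_U$. The image of $\posu K$ consists of the points $[\lambda_U]$ with $\lambda_U\ne0$, and these span $\mathbb{P}(V^*)$, so it suffices to show $\dim V^*\ge h+1$. The annihilator of $V^*$ in $F^{n+1}$ is the radical $R=\{w:\omega(\cdot,\dots,\cdot,w)=0\}$, and $\omega$ descends to a non-zero alternating $(h+1)$-form on the quotient $F^{n+1}/R$; as such a form requires $h+1$ linearly independent arguments, $\dim(F^{n+1}/R)\ge h+1$, whence $\dim V^*=(n+1)-\dim R\ge h+1$ and the generated subspace of $\PG nF^*$ has dimension at least $h$.

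I expect the real content to lie in part (b): the passage to the quotient modulo the radical, together with the remark that a non-zero alternating $(h+1)$-form cannot exist on a space of dimension less than $h+1$. By comparison, the null property is immediate from antisymmetry, and linearity reduces to the elementary two-parameter computation $\lambda_U=\beta\mu+\gamma\nu$ once the contraction $\lambda_U$ has been introduced; the only point needing a little care there is checking that each of the three algebraic alternatives matches the corresponding clause in the definition of a linear mapping.
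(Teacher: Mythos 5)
Your proof is correct, but it takes a genuinely different route from the paper's. The paper argues incidence-geometrically: for (a) it inducts on $h$, noting that a pencil of $(h-1)$-subspaces has a common point $P$ and that on such a pencil $\posu K$ agrees with $\posu K_P$, where $K_P$ is the derived complex in $[P,\Sigma]$ from Proposition~\ref{p:derivato}; the base case $h=1$ is quoted as classical, and Theorem~\ref{t:nullpol} then upgrades linearity plus the null property to a polarity, exactly as you do. For (b) the paper argues by contradiction: if the image spanned fewer than $h$ dimensions, all polar hyperplanes would contain a fixed $(n-h)$-subspace $X$, and an induction on $\dim(U\cap X)$, using only linearity along well-chosen pencils, would force $\mathbb D(\posu K)=\emptyset$. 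Your algebraic route --- the alternating $(h+1)$-form $\omega$, the contractions $\lambda_U$, the two-parameter pencil computation, and the radical $R$ for (b) --- is self-contained: it absorbs the $h=1$ base case instead of citing it, it exhibits the singular $(h-1)$-subspaces as the common zeros of the coordinate functionals of $U\mapsto\lambda_U$ (essentially re-proving Corollary~\ref{c:sing_h-1}), and the paper's subspace $X$ is identified concretely inside the projectivised radical. What you give up is the coordinate-free character the paper is deliberately pursuing; also, you tacitly use two standard multilinear facts worth stating explicitly: that hyperplanes of the Pl\"ucker space correspond to non-zero alternating $(h+1)$-forms (with ``alternating'' meaning vanishing on repeated arguments, so everything, including the null property, survives characteristic $2$), and that an alternating form vanishes on linearly dependent tuples --- the latter is needed both for ``$\lambda_U\equiv 0$ for all $U$ forces $\omega=0$'' and for identifying the annihilator of $V^*$ with $R$. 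Neither is a gap.
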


\begin{proof}
(a) We use induction on $h$; for $h=1$ this is well known \cite[p.~322]{Bu61}.
So, let $h>1$ and let $\varphi$ be a pencil of $(h-1)$-subspaces. There is a
point, say $P$, incident with every $U\in\varphi$. If $K_P=[P,\Sigma]_h$, then
$\varphi\subseteq\mathbb A(\posu K)$. Otherwise, by induction assumption,
$\posu K_P$ is a linear mapping. Thus the linearity of $\posu K$ follows by
observing that the restrictions of $\posu K$ and $\posu K_P$ to $\varphi$
coincide. Since $U\subseteq U^{\posu K}$ holds for all $U\in\mathbb D(\posu K)$
by definition, $\posu K$ is a null polarity according to
Theorem~\ref{t:nullpol}. Finally, $K\neq\Gamma(n,h)$ implies that not all
$(h-1)$-subspaces can be singular, whence the domain of $\posu K$ is non-empty.
\par
(b) Assume to the contrary that the image of $\posu K$ generates a subspace of
$\PG nF^*$ with a smaller dimension. So the intersection of all hyperplanes in
the image of $\posu K$ contains an $(n-h)$-subspace $X$, say. We show that this
implies the contradiction $\mathbb D(\posu K)=\emptyset$: Given an
$(h-1)$-subspace $U$ we argue by induction on $k:=\dim U\cap X$. For $k=-1$
there is no hyperplane passing through $U\vee X$, whence $U\in\mathbb A(\posu
K)$. Next, assume $k>-1$. Then there is a point $P\in\Sigma$ outside $U\vee X$.
Also, there exists a point $Q\in U\cap X$. Let $V$ be a complement of $Q$ with
respect to $U$. Then $U$ is an element of the pencil $ [V,V\vee P\vee
Q]_{h-1}$. All other elements of this pencil meet $X$ in a subspace of
dimension $k-1$, whence they belong to $\mathbb A(\posu K)$ by the induction
hypothesis. The linearity of $\posu K$ yields $U\in\mathbb A(\posu K)$, as
required.\end{proof}

\begin{cor}\label{c:sing_h-1}
The set of all singular $(h-1)$-subspaces of the linear complex $K$ is equal to
$\left(\vargra n{h-1}\cap W\right)^{\wp_{n,h-1}^{-1}}$ for some subspace $W$
of\/ $\mathrm{PG}\!\left(\!{{n+1\choose h}-1},F\right)$ with
\begin{equation*}
    {n+1\choose h}-(n+2)\le\dim W\le{n+1\choose h}-(h+2).
\end{equation*}
\end{cor}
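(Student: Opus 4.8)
The plan is to realise the set of singular $(h-1)$-subspaces as the pull-back of a subspace through the Plücker embedding, and then to read off the two dimension bounds from the structure theorem for linear mappings. By Theorem~\ref{t:upolarity}(a) the map $\posu K$ is a linear mapping $\Gamma(n,h-1)\to\PG nF^*$ with non-empty domain, and by the very definition of $\posu K$ an $(h-1)$-subspace $U$ is singular precisely when $U\in\mathbb A(\posu K)$. I would first apply Theorem~\ref{t:havlicek}, in the form valid for the Grassmannian $\Gamma(n,h-1)$, to factor $\posu K=\immplu n{h-1}\,\mu$ for a unique linear mapping $\mu:\PG{\binom{n+1}{h}-1}{F}\to\PG nF^*$. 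Writing $W:=\mathbb A(\mu)$ and using that $\immplu n{h-1}$ is global and injective, I obtain $U\in\mathbb A(\posu K)$ if and only if $U^{\immplu n{h-1}}\in W$; since $U^{\immplu n{h-1}}$ always lies on the Grassmann variety, this identifies the singular set with $\left(\vargra n{h-1}\cap W\right)^{\wp_{n,h-1}^{-1}}$, as claimed.

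Next I would invoke Theorem~\ref{t:brauner}: the linear mapping $\mu$ splits as a projection from $W=\mathbb A(\mu)$ onto a complementary subspace of $\PG{\binom{n+1}{h}-1}{F}$, followed by a collineation onto a subspace $I$ of $\PG nF^*$. In particular $W$ is genuinely a subspace, as the statement requires. Counting dimensions of complementary subspaces, the complement of $W$ (and hence $I$, via the collineation) has projective dimension
\begin{equation*}
    \dim I=\left(\binom{n+1}{h}-1\right)-\dim W-1=\binom{n+1}{h}-2-\dim W.
\end{equation*}
Thus each asserted bound on $\dim W$ is equivalent to a bound on $\dim I$: the upper bound $\dim W\le\binom{n+1}{h}-(h+2)$ corresponds to $\dim I\ge h$, and the lower bound $\dim W\ge\binom{n+1}{h}-(n+2)$ corresponds to $\dim I\le n$.

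The bound $\dim I\le n$ is immediate, since $I$ is a subspace of the $n$-dimensional space $\PG nF^*$. For $\dim I\ge h$ I would use Theorem~\ref{t:upolarity}(b), which guarantees that the image of $\posu K$ generates a subspace of dimension at least $h$. The step I expect to be the crux is to check that this generated subspace is exactly $I$, that is, that $(\vargra n{h-1})^\mu$ spans the whole image of $\mu$. Here I would use that the Grassmann variety spans its ambient space $\PG{\binom{n+1}{h}-1}{F}$: passing to the underlying vector spaces, the projection from $W$ is a linear surjection onto the complement, and a linear surjection carries a spanning set to a spanning set; applying the collineation then shows $(\vargra n{h-1})^\mu$ spans $I$, so that the subspace of Theorem~\ref{t:upolarity}(b) is precisely $I$ and $\dim I\ge h$. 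Combining $h\le\dim I\le n$ with the displayed identity yields both inequalities, completing the argument.
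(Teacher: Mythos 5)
Your proof is correct and follows exactly the route the paper intends: the corollary is stated without proof immediately after Theorem~\ref{t:upolarity}, and factoring $\posu K=\immplu n{h-1}\mu$ via Theorem~\ref{t:havlicek}, identifying the singular $(h-1)$-subspaces with $\bigl(\vargra n{h-1}\cap \mathbb A(\mu)\bigr)^{\wp_{n,h-1}^{-1}}$, and reading the two bounds off Theorem~\ref{t:brauner} together with part~(b) and $\dim \PG nF^*=n$ is the evident intended derivation. Your only superfluous step is the ``crux'': for the upper bound on $\dim W$ you do not need that $(\vargra n{h-1})^\mu$ spans $I$, only that the subspace generated by the image of $\posu K$ is contained in $I$, which already gives $\dim I\ge h$ (though your spanning argument is itself correct).
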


\begin{cor}\label{c:fasciosingolare}
If $U_1$ and $U_2$ are singular $(h-1)$-subspaces of $K$, and $U_1\sim U_2$ in
$\Gamma(n,h-1)$, then each element of the pencil determined by $U_1$ and $U_2$
is a singular $(h-1)$-subspace of $K$.
\end{cor}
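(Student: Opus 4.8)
The plan is to read this off as an immediate consequence of the linearity of $\posu K$ together with the description of singular subspaces as exceptional points. First I would recall from Theorem~\ref{t:upolarity}(a) that $\posu K:\Gamma(n,h-1)\rightarrow\PG nF^*$ is a null polarity, hence in particular a linear mapping in the sense of Section on linear mappings; and by the very definition of $\posu K$, an $(h-1)$-subspace $U$ is a singular $(h-1)$-subspace of $K$ if and only if $U\in\mathbb A(\posu K)$. Thus the assertion amounts to showing that the pencil determined by $U_1$ and $U_2$ is contained in $\mathbb A(\posu K)$.

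The case $U_1=U_2$ is vacuous, so I would assume $U_1\neq U_2$. Since $U_1\sim U_2$ in $\Gamma(n,h-1)$, the pencil determined by $U_1$ and $U_2$ is exactly the unique G-line $\varphi$ through these two G-points. Now $U_1$ and $U_2$ are two \emph{distinct} points of $\varphi$ lying in $\mathbb A(\posu K)$. I would then apply the trichotomy (i)--(iii) from the definition of a linear mapping to the G-line $\varphi$: alternative (i) forces $\varphi\cap\mathbb A(\posu K)=\emptyset$ and alternative (ii) permits exactly one exceptional point on $\varphi$, so neither is compatible with two distinct exceptional points; hence only (iii) can hold, giving $\varphi\subseteq\mathbb A(\posu K)$. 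This is precisely the remark recorded after the definition of a linear mapping, namely that two distinct collinear exceptional points force the whole connecting line to be exceptional.

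It then follows that every element of the pencil $\varphi$ belongs to $\mathbb A(\posu K)$ and is therefore a singular $(h-1)$-subspace of $K$, which is the claim. I do not expect any genuine obstacle here: all the substance has been front-loaded into Theorem~\ref{t:upolarity}(a), whose nontrivial content is the verification that $\posu K$ really is a linear mapping. Once that is granted, the corollary is a direct structural consequence of condition~(iii), so the only care needed is to rule out (i) and (ii) by the presence of two distinct exceptional points on $\varphi$.
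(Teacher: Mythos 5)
Your proof is correct and is exactly the argument the paper intends: since the singular $(h-1)$-subspaces are by definition the exceptional points of the linear mapping $\posu K$ (Theorem~\ref{t:upolarity}(a)), two distinct collinear exceptional G-points rule out alternatives (i) and (ii) of the linearity definition, so by (iii) the whole pencil lies in $\mathbb A(\posu K)$ --- precisely the remark recorded in the paper after the definition of a linear mapping. No gap; your handling of the degenerate case $U_1=U_2$ is a harmless extra precaution.
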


We have seen that $\posu K$ is a null polarity. Conversely, we have:

\begin{teor}\label{t:converse}
Let $\chi: \Gamma(n,h-1)\rightarrow\PG nF^*$, $1\le h\le n-1$, be a linear
mapping with non-empty domain satisfying the null property
\emph{(\ref{e:nullprop})}. Then there is a unique linear complex of
$h$-subspaces, say $K$, such that $\chi=\posu K$.
\end{teor}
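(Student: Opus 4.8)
The goal is to produce the hyperplane of $\PG NF$, $N={n+1 \choose h+1}-1$, whose Pl\"ucker preimage is the desired $K$. The plan is to first pass from the Grassmannian to its ambient Pl\"ucker space. By Theorem~\ref{t:havlicek}, applied with $\Sigma'=\PG nF^*$, the linear mapping $\chi$ factors as $\chi=\immplu n{h-1}\,\mu$ for a unique linear mapping $\mu:\PG MF\to\PG nF^*$, where $M={n+1\choose h}-1$ and $\PG MF=\mathbb{P}(\bigwedge^{h}F^{n+1})$. Before lifting $\mu$ I would record that the image of $\chi$ generates a subspace of $\PG nF^*$ of dimension at least $h$: the argument is identical to the one proving Theorem~\ref{t:upolarity}(b), which uses only linearity, the null property, and the non-emptiness of $\mathbb D(\chi)$, and never the hypothesis that $\chi$ already equals some $\posu K$. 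For $h=1$ the assertion is the classical correspondence between null systems and linear line complexes, so I may assume $h\ge 2$; then the image has dimension $\ge 2$.

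Next I lift $\mu$ to the underlying vector spaces. By Theorem~\ref{t:brauner}, $\mu$ is the composite of a projection from $\mathbb A(\mu)$ onto a complement $U_0$ and a collineation of $U_0$ onto the image; since the image has dimension $\ge 2$, the fundamental theorem of projective geometry makes this collineation semilinear, so $\mu$ is induced by a $\sigma$-semilinear map $\hat\mu:\bigwedge^h F^{n+1}\to(F^{n+1})^*$ for some field automorphism $\sigma$. Composing with the alternating wedge map yields a form
\[
\Omega(v_0,\dots,v_{h-1},x):=\bigl\langle\hat\mu(v_0\wedge\cdots\wedge v_{h-1}),\,x\bigr\rangle,
\]
which is $\sigma$-semilinear in the first $h$ slots (jointly, through the wedge) and $F$-linear in the last, and which already vanishes whenever two of its first $h$ arguments agree. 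The null property (\ref{e:nullprop}) translates into $\Omega(u_0,\dots,u_{h-1},u_i)=0$ for every $i$, that is, $\Omega$ also vanishes whenever the last argument repeats one of the first.

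The main obstacle is to show that $\sigma$ is the identity, so that $\Omega$ is genuinely $F$-multilinear. I would freeze the first $h-2$ arguments at suitable vectors and study the two-slot form $B(a,b):=\Omega(v_0,\dots,v_{h-2},a,b)$, which is $\sigma$-semilinear in $a$ and $F$-linear in $b$. The observations above give $B(a,a)=0$ for all $a$, whence $B(a,b)=-B(b,a)$ by polarising $B(a+b,a+b)=0$. Comparing $B(a,\lambda b)=\lambda B(a,b)$ with $B(a,\lambda b)=-B(\lambda b,a)=-\lambda^\sigma B(b,a)=\lambda^\sigma B(a,b)$ forces $\lambda=\lambda^\sigma$ whenever $B\not\equiv 0$; and $B\not\equiv 0$ can be arranged because $\mathbb D(\chi)\neq\emptyset$ guarantees $\hat\mu\neq 0$. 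Hence $\sigma=\mathrm{id}$, and $\Omega$, vanishing whenever any two of its $h+1$ arguments coincide, is an alternating $(h+1)$-linear form over $F$. It therefore defines a hyperplane $H$ of $\PG NF$, and I set $K:=(H\cap\vargra nh)^{\immplu nh^{-1}}$, a linear complex of $h$-subspaces, with $X=\langle v_0,\dots,v_h\rangle\in K$ if and only if $\Omega(v_0,\dots,v_h)=0$.

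It remains to identify $\posu K$ with $\chi$ and to prove uniqueness. For an $(h-1)$-subspace $U=\langle u_0,\dots,u_{h-1}\rangle$, the functional $x\mapsto\Omega(u_0,\dots,u_{h-1},x)$ is $\hat\mu$ of the Pl\"ucker point of $U$; it vanishes identically exactly when $U$ is singular for $K$, which matches $U\in\mathbb A(\chi)$, and otherwise its kernel is simultaneously the polar hyperplane $U^{\posu K}$ (the union of the members of $K$ through $U$) and the image $U^\chi$. Thus $\posu K=\chi$. Uniqueness is immediate from the reconstruction rule implicit in Theorem~\ref{t:upolarity} together with the dichotomy following Proposition~\ref{p:derivato}: an $h$-subspace $X$ lies in a linear complex $K'$ with $\posu{K'}=\chi$ precisely when either some $(h-1)$-subspace of $X$ is exceptional for $\chi$, or $X\subseteq U^\chi$ for every $(h-1)$-subspace $U\subset X$. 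Since this criterion refers only to $\chi$, any two such complexes coincide.
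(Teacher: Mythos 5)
Your proposal is correct, but it takes a genuinely different route from the paper's. The paper argues synthetically and very briefly: it defines $K$ by declaring $X\in K$ if and only if $X\subseteq U^\chi$ for some $(h-1)$-subspace $U\subseteq X$, uses Theorem~\ref{t:nullpol} (i.e.\ property (\ref{e:polarita})) to show that within a pencil $\varphi$ with vertex $U_\varphi$ membership in $K$ is governed by the single hyperplane $U_\varphi^\chi$, concludes that $K$ is a prime, and then invokes Proposition~\ref{p:primi}; uniqueness is the one-line remark that any complex $K'$ with $\posu{K'}=\chi$ must obey the same membership rule. That argument is uniform in $h\ge1$, coordinate-free, and needs neither an image-dimension bound nor any semilinearity discussion. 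You instead lift $\chi$ through the Pl\"ucker embedding and reconstruct the classical algebraic datum, an alternating $(h+1)$-linear form. For this you must (i)~quarantine $h=1$, where the image can be a line and Brauner's ``collineation'' is a mere bijection --- a pitfall the paper itself points out right after Theorem~\ref{t:brauner}, and which your dimension bound cannot exclude there; (ii)~re-prove the bound of Theorem~\ref{t:upolarity}(b) for an abstract $\chi$, and you are right that its proof uses only linearity, the null property and $\mathbb D(\chi)\neq\emptyset$, so it transfers verbatim; and (iii)~kill the automorphism $\sigma$. Your two-slot argument for (iii) is sound, including in characteristic $2$ (the signs cancel in $B(a,\lambda b)=\lambda^\sigma B(a,b)$), and $B\not\equiv0$ is correctly secured, since $\hat\mu$ vanishing on all decomposable $h$-vectors would force $\hat\mu=0$, contradicting $\mathbb D(\chi)\neq\emptyset$; likewise your identification of $U^{\posu K}$ with $\ker f_U=U^\chi$ and your uniqueness criterion (which matches the paper's defining rule for $K$) are fine. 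What your approach buys is an explicit form of the correspondence --- null polarities with non-empty domain correspond to nonzero alternating $(h+1)$-linear forms up to scalars --- generalising the $h=1$ alternating-matrix picture, something the paper's prime argument never makes visible. What it costs is length, the external classical input at $h=1$, and the fundamental theorem of projective geometry, whereas the paper gets by with Proposition~\ref{p:primi} alone (which, to be fair, itself rests on Theorems~\ref{t:brauner} and~\ref{t:havlicek}, so both proofs stand on the same foundations). One bookkeeping slip: you freeze $h-1$ arguments $v_0,\dots,v_{h-2}$, not ``the first $h-2$''; the displayed formula is right, the prose count is off by one.
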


\begin{proof}
We define a set $K$ of $h$-subspaces by setting $X\in K$ if, and only if, there is
an $(h-1)$-subspace $U\subseteq X$ such that $X\subseteq U^\chi$. Any linear
complex with the required properties necessarily has to coincide with this $K$.
\par
The mapping $\chi$ is a polarity by Theorem~\ref{t:nullpol}. From
(\ref{e:polarita}) we infer that if $X\in K$, then $X\subseteq W^\chi$ for
every $(h-1)$-subspace $W$ of $X$. If $\varphi$ is the pencil of $h$-subspaces
determined by the $(h-1)$-subspace $U_\varphi$ and the $(h+1)$-subspace
$V_\varphi$, $U_\varphi\subseteq V_\varphi$, then the G-points of $\varphi$
belonging to $K$ are exactly the elements $X\in\varphi$ such that $X\subseteq
U_\varphi^\chi$. Therefore, either $\varphi\subseteq K$ or $|\varphi\cap K|=1$,
whence $K$ is a prime. The proof is now accomplished by applying
Proposition~\ref{p:primi}.
\end{proof}

The above theorem is a generalisation of the classical one, characterising a
linear complex of lines as a set arising from a (possibly degenerate) null
polarity $\PG nF \to {\PG nF}^*$.
\par
By dual arguments we obtain a linear mapping $\pogiu K:
\Gamma(n,h+1)\rightarrow\PG nF$, $0\le h\le n-2$, the \emph{dual polarity\/} of
$K$. There holds:
\begin{teor}
Let $\chi: \Gamma(n,h+1)\rightarrow\PG nF$, $0\le h\le n-2$, be a linear
mapping with non-empty domain, and assume that for each $V\in\mathbb D(\chi)$,
$V^\chi\subseteq V$. Then there is a unique linear complex of $h$-subspaces,
say $K$, such that $\chi=\pogiu K$.
\end{teor}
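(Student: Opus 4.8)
The plan is to deduce this theorem from its already established counterpart, Theorem~\ref{t:converse}, by means of a projective duality; the phrase ``by dual arguments'' indicates that the whole machinery is self-dual. Since $F$ is commutative, $\PG nF$ admits a correlation $\delta$, i.e.\ an inclusion-reversing bijection of its subspace lattice onto itself sending each $d$-subspace $A$ to a subspace $A^\delta$ of dimension $n-1-d$; in particular $\delta$ interchanges points and hyperplanes and carries the $h$-subspaces bijectively onto the $(n-h-1)$-subspaces. I shall use that $\delta$ induces isomorphisms of Grassmannians $\Gamma(n,k)\to\Gamma(n,n-k-1)$ (mapping pencils to pencils) and a collineation $\PG nF\to\PG nF^*$, as already recorded in the discussion of $\Gamma(n,h,F)\cong\Gamma(n,n-h-1,F)$.

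First I would transport the given $\chi$ across $\delta$. Define $\chi':\Gamma(n,n-h-2)\to\PG nF^*$ by setting, for $V\in\mathbb D(\chi)$, $(V^\delta)^{\chi'}:=(V^\chi)^\delta$, and declaring $V^\delta$ exceptional exactly when $V\in\mathbb A(\chi)$. As the conjugate $\delta\circ\chi\circ\delta^{-1}$ of the linear map $\chi$ by the isomorphisms induced by $\delta$, the map $\chi'$ is again linear and has non-empty domain. Applying the inclusion-reversing $\delta$ to the hypothesis $V^\chi\subseteq V$ gives $V^\delta\subseteq(V^\chi)^\delta=(V^\delta)^{\chi'}$, so $\chi'$ satisfies the null property (\ref{e:nullprop}). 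Theorem~\ref{t:converse}, applied with its parameter equal to $n-h-1$ (the constraint there becomes precisely $0\le h\le n-2$), then furnishes a unique linear complex $K'$ of $(n-h-1)$-subspaces with $\chi'=\posu{K'}$. I set $K:=(K')^{\delta^{-1}}$. Since $\delta^{-1}$ induces an isomorphism $\Gamma(n,n-h-1)\to\Gamma(n,h)$ and $K'$ is a prime, $K$ is a prime of $\Gamma(n,h)$, hence a linear complex by Proposition~\ref{p:primi}.

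The remaining and substantive step is to verify $\chi=\pogiu K$. Fix an $(h+1)$-subspace $V$ and put $W:=V^\delta$. Since $X\subseteq V$ is equivalent to $W\subseteq X^\delta$ and $X\in K$ to $X^\delta\in K'$, the dual star $[\emptyset,V]_h$ corresponds under $\delta$ to the star with centre $W$, and the elements of $K$ in that dual star correspond to the elements of $K'$ through $W$. Hence ``$V$ is total for $K$'' matches ``$W$ is singular for $K'$'', so the domains of $\pogiu K$ and $\chi$ agree. For non-total $V$ the pole $P=V^{\pogiu K}$ is the common point of all $X\in K$ with $X\subseteq V$; dualising, $P^\delta$ is a hyperplane containing every element of $K'$ through $W$, hence $P^\delta$ equals the polar hyperplane $W^{\posu{K'}}=W^{\chi'}=(V^\chi)^\delta$. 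As $\delta$ is injective, $P=V^\chi$, whence $\pogiu K=\chi$. Uniqueness follows by the same dictionary: if $\pogiu{K_1}=\chi$ as well, then $\posu{(K_1^\delta)}=\chi'=\posu{K'}$, so $K_1^\delta=K'$ by the uniqueness clause of Theorem~\ref{t:converse}, giving $K_1=K$.

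The one delicate point, which I expect to be the main obstacle, is exactly this last identification: showing that $\delta$ sends the dual polarity $\pogiu K$ to the null polarity $\posu{K'}$ of the dual complex, i.e.\ that the pole of $V$ (a meet of $h$-subspaces of $V$) is the $\delta$-image of the polar hyperplane of $W$ (the join of the $(n-h-1)$-subspaces of $K'$ through $W$), with the total/singular dichotomy matching. Once the definitions of pole and of polar hyperplane are read off against the inclusion-reversal of $\delta$, everything else is routine conjugation of linear mappings. A direct dualisation of the proof of Theorem~\ref{t:converse} would also be possible, but it would first require a dual form of Theorem~\ref{t:nullpol}, which the present duality argument conveniently circumvents.
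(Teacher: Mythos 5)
Your proposal is correct and is precisely the argument the paper intends: the paper offers no written proof of this theorem beyond the preceding phrase ``by dual arguments'', and your transport of $\chi$ through a correlation $\delta$ to a null polarity $\chi':\Gamma(n,n-h-2)\rightarrow\PG nF^*$, followed by an application of Theorem~\ref{t:converse} with parameter $n-h-1$ (so that the hypothesis $0\le h\le n-2$ matches) and the total/singular, pole/polar-hyperplane dictionary, is exactly the dualisation being invoked. In particular, your verification that $\delta$ carries $\pogiu K$ to $\posu{K'}$ --- the point you yourself single out as delicate --- correctly fills in the only step the paper leaves implicit.
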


\section{Existence of singular $(h-1)$-subspaces}\label{se:singulaer}

We start with the following technical result.

\begin{prop}\label{p:k_proiez}
Assume that $k$, $h$, and $n$ are integers such that $1\le k\le h\le n-1$. Let
$K$ be a linear complex of $h$-subspaces of $\PG nF$ having no singular
$(h-1)$-subspace. Then $\PG{n+k-h}F$ contains a linear complex of $k$-subspaces
having no singular $(k-1)$-subspace.
\end{prop}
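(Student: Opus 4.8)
**The plan is to reduce the dimension by intersecting with a suitable subspace, using the repeated application of Proposition~\ref{p:derivato}.**

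The goal is to go from a linear complex of $h$-subspaces in $\PG nF$ to a linear complex of $k$-subspaces in $\PG{n+k-h}F$, with the drop in ambient dimension being $h-k$, exactly matching the drop in the subspace dimension. This suggests restricting $K$ to an interval of the form $[\emptyset,W]$ where $\dim W = n+k-h$, i.e.\ passing to a hyperplane-like subspace $W$ of codimension $h-k$ and considering $K(\emptyset,W) = K \cap [\emptyset,W]_h$. Inside the projective space $[\emptyset,W] \cong \PG{n+k-h}F$, by Proposition~\ref{p:derivato} (with $U=\emptyset$, so $\dim U = -1$ and $h-1-\dim U = h$) we do \emph{not} get what we want --- the restricted complex would still consist of $h$-subspaces. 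So the correct move is to fix an $(h-k)$-subspace $U$ \emph{inside} $W$, or rather to pass to a star-type interval: take $U$ with $\dim U = h-k-1$ and set $K(U,\Sigma)$, which by Proposition~\ref{p:derivato} is a linear complex of $(h-1-\dim U) = k$-subspaces in $[U,\Sigma] \cong \PG{n-(h-k)}F = \PG{n+k-h}F$, provided $[U,\Sigma]_h \not\subseteq K$.

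\textbf{First}, I would choose $U$ to be an $(h-k-1)$-subspace that is \emph{not contained in any singular $(h-1)$-subspace}, and more precisely such that the star-restriction behaves well. The key point is to ensure two things: that $[U,\Sigma]_h \not\subseteq K$ (so that $K(U,\Sigma)$ is genuinely a linear complex and not the whole Grassmannian), and that $K(U,\Sigma)$ inherits the property of having \emph{no} singular $(k-1)$-subspace. A singular $(k-1)$-subspace of $K(U,\Sigma)$ in $[U,\Sigma]$ corresponds to a subspace $U'$ with $U \subseteq U'$, $\dim U' = h-1$, such that every $h$-subspace of $[U,\Sigma]$ through $U'$ lies in $K$; but the $h$-subspaces of $[U,\Sigma]$ through such $U'$ are exactly a subset of the star of $U'$ in $\Sigma$. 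I would argue that if $U'$ were singular for $K(U,\Sigma)$, then combined with the hypothesis that $K$ has no singular $(h-1)$-subspace in $\Sigma$, one reaches a contradiction --- because $U'$ is an $(h-1)$-subspace of $\Sigma$, and non-singularity of $U'$ in $\Sigma$ gives a polar hyperplane $E$ cutting down the full star, which restricts to cut down the sub-star inside $[U,\Sigma]$ as well.

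\textbf{The main obstacle} I anticipate is the existence of a suitable $U$: one must verify that an $(h-k-1)$-subspace $U$ can be chosen so that $[U,\Sigma]_h \not\subseteq K$, i.e.\ that not every $h$-subspace containing $U$ lies in $K$. Since $K$ is a proper subset of the Grassmannian (a prime, with $K \neq \calp$), not \emph{all} $h$-subspaces lie in $K$; the technical content is to propagate this genericity down to some star of a $(h-k-1)$-subspace. I expect this to follow by a counting or incidence argument, perhaps inductively peeling off one projective dimension at a time (reducing $h$ to $h-1$ and $n$ to $n-1$ in a single step, then iterating $h-k$ times), where at each stage Proposition~\ref{p:derivato} guarantees the restricted object is again a linear complex and the no-singular-subspace hypothesis is preserved. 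The cleanest route is probably an induction on $h-k$: the base case $h-k=0$ is trivial (take $U=\emptyset$, $k=h$, and the space is $\PG nF$ itself), and the inductive step restricts to a single non-singular point's associated star to decrease $h$ and $n$ each by one while maintaining all hypotheses.
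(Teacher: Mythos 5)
Your proposal is correct and is essentially the paper's proof: the paper likewise fixes an \emph{arbitrary} $(h-k-1)$-subspace $W$, passes to $K\cap[W,\Sigma]_h$ inside $[W,\Sigma]\cong\PG{n+k-h}F$, and transfers non-singularity by noting that any $h$-subspace of $\Sigma$ through an $(h-1)$-subspace $L\supseteq W$ automatically contains $W$ (so the star of $L$ in $[W,\Sigma]$ \emph{equals} its star in $\Sigma$, not merely sits inside it), then invokes Proposition~\ref{p:primi}. The only difference is that the ``main obstacle'' you anticipate dissolves at once: since \emph{no} $(h-1)$-subspace of $\Sigma$ is singular, \emph{every} choice of $U$ works --- for any $(h-1)$-subspace $L\supseteq U$ there is an $h$-subspace $X\supseteq L$ with $X\notin K$, and this one observation simultaneously gives $[U,\Sigma]_h\not\subseteq K$ and the absence of singular $(k-1)$-subspaces, so neither a counting argument nor your fallback induction on $h-k$ is needed.
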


\begin{proof}
Let $W$ be any $(h-k-1)$-subspace of $\Sigma=\PG nF$. The intersection
$K_W=K\cap[W,\Sigma]_h$ is a set of $k$-subspaces of
$[W,\Sigma]\cong\PG{n+k-h}F$. If $L$ is a $(k-1)$-subspace of $[W,\Sigma]$,
then $L$ is an $(h-1)$-subspace of $\Sigma$, containing $W$. By assumption an
$h$-subspace $X$ of $\Sigma$ exists that contains $L$ and does not belong to
$K$. Such $X$ is a $k$-subspace of $[W,\Sigma]$, containing $L$ and not
belonging to $K_W$. Now Proposition~\ref{p:primi} easily yields the assertion.
\end{proof}

A linear complex $K$ of lines is the set of all self-conjugate lines of a null
polarity with non-empty domain in $\PG nF$. Since each alternating matrix has
even rank, $K$ always has a singular point for $n$ even; on the other hand, for
each odd $n$ there are linear complexes of lines without singular points. Thus,
by Proposition~\ref{p:k_proiez} with $k=1$, we have that if $n-h\equiv1\ \pmod
2$, then each linear complex of $h$-subspaces in $\PG nF$ has a singular
$(h-1)$-subspace. In our proof of Proposition~\ref{p:k_proiez} the subspace $W$
was chosen arbitrarily. Hence this result can be refined as follows:

\begin{prop}\label{p:npiuh}
Let $K$ be a linear complex of $h$-subspaces in $\PG nF$. If $1\le h\le n-1$
and $n-h\equiv 1 \pmod 2$, then each $(h-2)$-subspace in $\PG nF$ is contained
in a singular $(h-1)$-subspace.
\end{prop}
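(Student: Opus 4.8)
The plan is to localise the global parity argument from the paragraph preceding the statement by fixing the $(h-2)$-subspace and passing to the quotient it determines. Let $T$ be an arbitrary $(h-2)$-subspace of $\Sigma=\PG nF$ and consider the interval $[T,\Sigma]$, which is a projective space isomorphic to $\PG{n-h+1}F$. Under this identification the $h$-subspaces of $\Sigma$ containing $T$ are precisely the \emph{lines} of $[T,\Sigma]$, while the $(h-1)$-subspaces of $\Sigma$ containing $T$ are precisely its \emph{points}. This is exactly the situation produced by taking $W=T$ in the proof of Proposition~\ref{p:k_proiez} with $k=1$, where $W$ was chosen arbitrarily; so the intended route is to replay that argument relative to $T$.

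First I would dispose of the degenerate case. If $[T,\Sigma]_h\subseteq K$, then every $h$-subspace containing $T$ lies in $K$; consequently every $(h-1)$-subspace $U$ with $T\subseteq U$ is singular, since each $h$-subspace containing $U$ also contains $T$ and hence lies in $K$. In this case any $(h-1)$-subspace through $T$ already works.

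Otherwise, by Proposition~\ref{p:derivato} (applied with the roles $U=T$ and $W=\Sigma$) the set $K(T,\Sigma)=K\cap[T,\Sigma]_h$ is a linear complex of lines in $[T,\Sigma]\cong\PG{n-h+1}F$. The hypothesis $n-h\equiv1\pmod 2$ says that $n-h+1$ is even, so by the classical fact invoked in the preceding paragraph --- a linear complex of lines is the set of self-conjugate lines of a null polarity, whose alternating matrix has even rank and therefore a nonzero radical whenever the projective dimension is even --- the complex $K(T,\Sigma)$ possesses a singular point $p$.

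It then remains to translate $p$ back to $\Sigma$. The point $p$ of $[T,\Sigma]$ is an $(h-1)$-subspace $U$ of $\Sigma$ with $T\subseteq U$, and here lies the one delicate point: I must check that singularity of $p$ in the derived line complex is the \emph{same} condition as singularity of $U$ in $K$. This is clean, because an $h$-subspace of $\Sigma$ contains $U$ if and only if, viewed in $[T,\Sigma]$, it is a line through $p$; and any such $h$-subspace automatically contains $T$, so it belongs to $K$ exactly when it belongs to $K(T,\Sigma)$. Hence the star of $U$ is contained in $K$ precisely when all lines through $p$ lie in $K(T,\Sigma)$, i.e. $U$ is a singular $(h-1)$-subspace of $K$ containing $T$, as required. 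The main obstacle is therefore not the existence of the singular line-complex point, which is handed to us by the parity of $n-h+1$, but rather this identification of ``singular downstairs'' with ``singular upstairs'', together with the need to isolate the exceptional case $[T,\Sigma]_h\subseteq K$ in which Proposition~\ref{p:derivato} does not return a linear complex of lines.
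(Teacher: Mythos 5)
Your proof is correct and follows essentially the same route as the paper, which obtains Proposition~\ref{p:npiuh} precisely by noting that the subspace $W$ in the proof of Proposition~\ref{p:k_proiez} (with $k=1$) was arbitrary --- that is, by the localisation at a fixed $(h-2)$-subspace $T$, the passage to $[T,\Sigma]\cong\PG{n-h+1}F$, and the parity fact that a linear complex of lines in an even-dimensional projective space has a singular point. Your explicit handling of the degenerate case $[T,\Sigma]_h\subseteq K$ and your verification that singularity in the derived line complex coincides with singularity in $K$ (because every $h$-subspace through $U\supseteq T$ contains $T$) simply spell out steps the paper leaves implicit.
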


The question concerning the existence of total $(h+1)$-subspaces is dealt with
in \cite{Ba52} \emph{over the field $\mathbb C$ of complex numbers\/} as
follows. The \emph{product\/} of a linear complex of $h$-subspaces, say $K$,
and a linear complex of points, i.e., a hyperplane $H$, is the set
\begin{equation*}
    K\cdot H:=\{X\in\Gamma(n,h+1,\mathbb C)\mid \exists\, Y\in K : Y\subseteq
    X\cap H\}
\end{equation*}
If $[\emptyset,H]_h\subseteq K$ then $H$ is called a \emph{total hyperplane\/}
of $K$; in this case $K\cdot H$ is the set of all $(h+1)$-subspaces of
PG$(n,\mathbb C)$. Otherwise, $K\cdot H$ is a linear complex of
$(h+1)$-subspaces in PG$(n,\mathbb C)$. Let $H_0$, $H_1$, $\ldots$, $H_n$ be
independent hyperplanes in PG$(n,\mathbb C)$. Then the set $\Theta$ of all
total $(h+1)$-subspaces of $K$ is the intersection of $K\cdot H_0, K\cdot
H_1,\ldots, K\cdot H_n$; that is, $\Theta$ is represented on the Grassmann
variety ${\calg}_{n,h+1}$ as intersection with at most $n+1$ hyperplanes. (This
is dual to our Corollary~\ref{c:sing_h-1}, and holds over an arbitrary ground
field.) Since ${\calg}_{n,h+1}$ is an algebraic variety with dimension
$(h+2)(n-h-1)$, a sufficient condition for the existence of total
$(h+1)$-subspaces is
\begin{equation*}
    (h+2)(n-h-1)-(n+1)\ge0,
\end{equation*}
that is
\begin{equation*}
    (h+1)(n-h-2)\ge1.
\end{equation*}
Since $h+1\ge1$, the condition reads $n-h-2>0$ or, equivalently, $h<n-2$.
Summarising, we obtain:
\begin{teor}\cite{Ba52}\label{t:Ba52}
Each linear complex of $h$-subspaces in $\PG n{\mathbb C}$, with $h<n-2$, has a
total $(h+1)$-subspace.
\end{teor}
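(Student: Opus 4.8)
The plan is to follow \cite{Ba52} and exhibit the set $\Theta$ of total $(h+1)$-subspaces of $K$ as the intersection of the Grassmann variety $\vargra n{h+1}$ with a bounded number of hyperplanes, and then to run a dimension count that is valid precisely because the ground field is algebraically closed. Concretely, I would first show that, under the Pl\"ucker embedding $\immplu n{h+1}$, the set $\Theta$ is cut out on $\vargra n{h+1}$ by at most $n+1$ hyperplanes; then I would compare $n+1$ with $\dim\vargra n{h+1}=(h+2)(n-h-1)$ and read off non-emptiness exactly when $h<n-2$.

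For the first step I would fix $n+1$ independent hyperplanes $H_0,\dots,H_n$ of $\PG n{\mathbb C}$ and form the products $K\cdot H_i$. One checks that each $K\cdot H_i$ is a prime of $\Gamma(n,h+1,\mathbb C)$, hence by Proposition~\ref{p:primi} a linear complex of $(h+1)$-subspaces, that is, a hyperplane section of $\vargra n{h+1}$; if some $H_i$ happens to be total, then $K\cdot H_i$ imposes no condition at all, which only helps. The heart of the matter is the identity $\Theta=\bigcap_{i=0}^{n}(K\cdot H_i)$. The inclusion $\subseteq$ is immediate, since for a total $V$ the trace $V\cap H_i$ always contains an $h$-subspace lying in $K$. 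For $\supseteq$ I would argue by contradiction through the pole: if $V$ is not total, the restriction of $K$ to the dual star $[\emptyset,V]_h$ has a pole $P\in V$, and then $V\in K\cdot H_i$ forces $P\in H_i$ for every $i$; since $\bigcap_i H_i=\emptyset$ by independence, this is absurd. I note that this identity is precisely the dual of Corollary~\ref{c:sing_h-1}, accessible through a correlation of $\PG n{\mathbb C}$, and it is what yields the bound of at most $n+1$ hyperplanes.

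For the second step I would use that $\vargra n{h+1}$ is an irreducible projective variety over $\mathbb C$ of dimension $(h+2)(n-h-1)$. By the standard fact that a projective variety of positive dimension over an algebraically closed field meets every hyperplane, applied iteratively, cutting $\vargra n{h+1}$ by $n+1$ hyperplanes lowers the dimension by at most one at each step and leaves a non-empty set as long as the running dimension stays non-negative. Hence $\Theta\neq\emptyset$ whenever $(h+2)(n-h-1)\ge n+1$. A one-line rearrangement gives $(h+2)(n-h-1)-(n+1)=(h+1)(n-h-2)-1$, so the inequality reads $(h+1)(n-h-2)\ge 1$; as $h+1\ge 1$, this is equivalent to $n-h-2>0$, i.e.\ $h<n-2$. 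A point of $\Theta$ is then a total $(h+1)$-subspace, which is the assertion.

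The main obstacle is not the combinatorial bookkeeping but the non-emptiness claim in the final step, and this is exactly where algebraic closedness is indispensable: over a non-closed field a codimension-$(n+1)$ linear section of a variety of dimension at least $n+1$ may have no rational points, so the dimension count would guarantee nothing. This is the reason the statement is confined to $\mathbb C$ and why, as the surrounding discussion stresses, the existence question over an arbitrary field $F$ lies beyond the reach of this argument.
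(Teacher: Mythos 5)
Your proposal follows exactly the paper's route (itself a sketch of Baldassarri's argument): form the products $K\cdot H_i$ for $n+1$ independent hyperplanes, identify $\Theta=\bigcap_{i=0}^{n}(K\cdot H_i)$ as a linear section of $\vargra n{h+1}$ by at most $n+1$ hyperplanes, and conclude by the same dimension count $(h+2)(n-h-1)\ge n+1$, which over $\mathbb{C}$ forces non-emptiness precisely when $h<n-2$. Your verification of the identity via the pole $P\in V$ of a non-total $V$ is a welcome detail the paper leaves implicit, but the approach is essentially identical.
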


Dually, there holds:

\begin{cor}\label{c:Ba52}
Each linear complex of $h$-subspaces in $\PG n{\mathbb C}$, with $h\ge2$, has a
singular $(h-1)$-subspace.
\end{cor}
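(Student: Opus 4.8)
The plan is to deduce this corollary from Theorem~\ref{t:Ba52} by exploiting the self-duality of $\PG n{\mathbb C}$. Since $\mathbb C$ is commutative, $\PG n{\mathbb C}$ admits a correlation $\pi$, and as observed earlier this induces an isomorphism of semilinear spaces $\Gamma(n,h,{\mathbb C})\to\Gamma(n,n-h-1,{\mathbb C})$. First I would transport the given linear complex $K$ of $h$-subspaces along $\pi$. Because $\pi$ reverses inclusion and carries $d$-subspaces to $(n-1-d)$-subspaces, the image $K^\pi:=\{X^\pi\mid X\in K\}$ is a set of $(n-1-h)$-subspaces. Writing $h':=n-1-h$, I would argue that $K^\pi$ is a linear complex of $h'$-subspaces: the isomorphism $\Gamma(n,h)\to\Gamma(n,h')$ sends G-lines to G-lines and thus carries the prime $K$ to a prime of $\Gamma(n,h')$, which by Proposition~\ref{p:primi} is a linear complex, the field being commutative.

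The key step is to match the two notions of degeneracy under $\pi$. A star of $\Gamma(n,h)$ with centre an $(h-1)$-subspace $U$ consists of all $h$-subspaces through $U$; its image under $\pi$ is the set of all $(n-1-h)$-subspaces contained in $U^\pi$, where $\dim U^\pi=n-1-(h-1)=h'+1$. Hence a star of $\Gamma(n,h)$ corresponds to a dual star of $\Gamma(n,h')$ whose $(h'+1)$-subspace is $U^\pi$. Consequently $U$ is a singular $(h-1)$-subspace of $K$, i.e. the star is contained in $K$, if and only if $U^\pi$ is a total $(h'+1)$-subspace of $K^\pi$, i.e. the corresponding dual star is contained in $K^\pi$.

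Finally I would verify the numerical hypotheses. The assumption $h\ge2$ is equivalent to $h'=n-1-h<n-2$, and the bound $h\le n-1$ guarantees $h'\ge0$. Therefore Theorem~\ref{t:Ba52} applies to $K^\pi$ and produces a total $(h'+1)$-subspace $V$ of $K^\pi$. Pulling back, $V^{\pi^{-1}}$ is then a singular $(h-1)$-subspace of $K$, which establishes the claim.

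I expect the only real work to be bookkeeping: checking cleanly that $\pi$ transports the prime/linear-complex structure to its dual and that the star/dual-star dictionary genuinely interchanges \emph{singular} and \emph{total} with the dimensions tracked correctly. No idea beyond Theorem~\ref{t:Ba52} and the self-duality of $\PG n{\mathbb C}$ should be required.
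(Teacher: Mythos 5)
Your proposal is correct and is precisely the paper's argument: the authors deduce Corollary~\ref{c:Ba52} from Theorem~\ref{t:Ba52} with the single word ``Dually'', and your write-up merely makes explicit the correlation $\Gamma(n,h,\mathbb C)\to\Gamma(n,n-1-h,\mathbb C)$, the star/dual-star and singular/total dictionary, and the bookkeeping $h\ge2\Leftrightarrow n-1-h<n-2$, all of which check out.
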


A \emph{line spread\/} of a projective space $\PG nF$ is a set of lines, say
$\calf$, such that each point of $\PG nF$ belongs to exactly one line of
$\calf$. A line spread is called \emph{geometric\/} (or \emph{normal\/}) if for
every pair of distinct lines of $\calf$, say $\ell$, $m$, the lines of $\calf$
in the solid $\ell\vee m$ form a spread of $\ell\vee m$. The line spread
$\calf$ is \emph{linear\/} if $\calf^{\immplu n1}$ is the intersection of the
Grassmann variety $\vargra n1$ with a subspace of its ambient space $\PG MF$
with $M:=(n^2+n-2)/2$.

\begin{prop}\label{p:spread}
Let $K$ be a linear complex of planes in $\PG nF$ having no singular line. Then
$n$ is even. For each hyperplane $H$ in $\PG nF$, let $\calf_H$ be the set of
all lines whose polar hyperplane is $H$. Then $\calf_H$ is a line spread of
$H$.
\end{prop}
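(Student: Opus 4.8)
The plan is to reduce the whole statement to the classical theory of a linear complex of lines, carried out inside the star of a single point. First the parity: with $h=2$ we have $n-h=n-2$, so if $n$ were odd then $n-h\equiv 1\pmod 2$, and Proposition~\ref{p:npiuh} would force every point of $\PG nF$ (every $(h-2)$-subspace) to lie on a singular line of $K$, contrary to the hypothesis that $K$ has no singular line; hence $n$ is even. Next, if $\ell\in\calf_H$, i.e.\ $\ell^{\posu K}=H$, then the null property gives $\ell\subseteq\ell^{\posu K}=H$, so every line of $\calf_H$ is a line of $H$. It therefore remains to prove that each point $P\in H$ lies on exactly one line of $\calf_H$.

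Fix $P\in H$ and put $\Sigma=\PG nF$. Since $K$ has no singular line, Proposition~\ref{p:derivato} shows that $K_P=K(P,\Sigma)$ is a genuine linear complex of lines in the projective space $[P,\Sigma]\cong\PG{n-1}F$, whose points are exactly the lines of $\Sigma$ through $P$ (the excluded case $K_P=[P,\Sigma]_2$ cannot occur, as it would make every line through $P$ singular). I would then check that $K_P$ has no singular point: a singular point of $K_P$ is a line $\ell\ni P$ all of whose containing planes lie in $K$, i.e.\ a singular line of $K$, of which there are none. By Theorem~\ref{t:upolarity}(a), applied to the linear complex of lines $K_P$, the map $\posu{K_P}$ is a null polarity of $[P,\Sigma]$; having empty exceptional set it is non-degenerate, hence a bijective correlation between the points and the hyperplanes of $[P,\Sigma]$ (this also re-proves that $n$ is even, since a full-rank alternating form needs even vector-space dimension).

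The crux is to match $\posu{K_P}$, computed inside $[P,\Sigma]$, with $\posu K$, computed in $\Sigma$. For a line $\ell\ni P$ the polar hyperplane $\ell^{\posu{K_P}}$ consists of those lines through $P$ lying in a plane of $K$ that contains $\ell$. Using the star description of $K$ at $\ell$ (a plane $X\supseteq\ell$ belongs to $K$ exactly when $X\subseteq\ell^{\posu K}$), together with $\ell\vee m\subseteq\ell^{\posu K}$ whenever $m\subseteq\ell^{\posu K}$, I would identify $\ell^{\posu{K_P}}$ with the hyperplane of $[P,\Sigma]$ cut out by $\ell^{\posu K}$; consequently $\ell^{\posu K}=H$ holds if and only if $\posu{K_P}$ sends $\ell$ to the hyperplane $\bar H$ of $[P,\Sigma]$ determined by $H$. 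Since $\posu{K_P}$ is bijective, $\bar H$ has a unique preimage, so there is exactly one line $\ell\ni P$ with $\ell^{\posu K}=H$, i.e.\ exactly one line of $\calf_H$ through $P$; this proves that $\calf_H$ is a line spread of $H$. I expect the identification of $\ell^{\posu{K_P}}$ with the trace of $\ell^{\posu K}$ on $[P,\Sigma]$ — and the accompanying matching of exceptional sets — to be the only delicate point, everything else being either classical (non-degenerate symplectic polarities are bijective) or routine bookkeeping.
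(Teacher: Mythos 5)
Your proof is correct, but it reaches the decisive bijection by a different route than the paper. Both arguments localise at a point $P\in H$ and show that the lines through $P$ are in bijection with the hyperplanes through $P$ under the polarity; the paper does this in one stroke: since $K$ has no singular line, $\posu K$ is a \emph{global} linear mapping (Theorem~\ref{t:upolarity}), so by Theorem~\ref{t:brauner} its restriction to the star $S\cong\PG{n-1}F$ of lines through $P$ is a collineation onto an $(n-1)$-subspace of $\PG nF^*$, which by the null property is contained in --- hence equals --- the dual hyperplane consisting of all hyperplanes through $P$; the unique preimage of $H$ then gives the spread property directly, with no detour through a derived complex. You instead pass to $K_P$ via Proposition~\ref{p:derivato}, verify it is a linear complex of lines without singular points, invoke the classical fact that a global null polarity of $\PG{n-1}F$ is a bijective correlation (itself recoverable from Theorem~\ref{t:brauner} by a dimension count, or from the even rank of a non-degenerate alternating form), and then match $\posu{K_P}$ with the trace of $\posu K$. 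The matching you flag as delicate is in fact immediate: every plane containing a line $\ell\ni P$ automatically contains $P$, so the elements of $K_P$ through $\ell$ are exactly the elements of $K$ through $\ell$, whence the two polar hyperplanes coincide as point sets and singularity for $K_P$ is the same as singularity for $K$ --- indeed the paper exploits precisely this coincidence of restrictions in its inductive proof of Theorem~\ref{t:upolarity}(a), so that step is pre-validated. What your route buys is a clean reduction to the classical $h=1$ theory of null systems, at the cost of the extra bookkeeping around $K_P$; what the paper's route buys is brevity, since globality of $\posu K$ makes the star restriction a collineation outright. Your parity arguments (Proposition~\ref{p:npiuh} with $h=2$, plus the re-derivation from alternating forms) and the use of the null property to place the lines of $\calf_H$ inside $H$ are both sound.
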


\begin{proof}
By Proposition~\ref{p:npiuh}, $n$ is even. Let $A$ and $H$ be a point and a
hyperplane in $\PG nF$, respectively, such that $A\in H$. The set $S$ of all
lines of $\PG nF$ containing the point $A$ is a subspace of $\Gamma(n,1)$
isomorphic to $\PG{n-1}F$. Since $\posu K$ is global, its restriction to $S$ is
a collineation. As a consequence, $S^{\posu K}$ is the set of \emph{all\/}
hyperplanes through $A$. So, a unique line $\ell$ through $A$ exists such that
$\ell^{\posu K}=H$. This proves that $\calf_H$ is a line spread of $H$.
\end{proof}

\begin{prop}\label{p:lin_spread}
Under the assumptions of Proposition\/~\emph{\ref{p:spread}} the line spread
$\calf_H$ is linear. More precisely,
\begin{equation*}
    \calf_H^{\immplu n1}=R\cap{\vargra n1},
\end{equation*}
where $R$ is an $(M-n)$-subspace of the ambient space $\PG MF$ of the Grassmann
variety $\vargra n1$. Furthermore, in each of the following cases the line
spread $\calf_H$ is not geometric.
\begin{itemize}
\item[\rm(a)]
The field $F$ is quadratically closed.

\item[\rm(b)]
The field $F$ is finite.
\end{itemize}
\end{prop}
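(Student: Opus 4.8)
The plan is to exploit that, because $K$ has no singular line, the null polarity $\posu K\colon\Gamma(n,1)\to\PG nF^*$ provided by Theorem~\ref{t:upolarity} is \emph{global}, and that by Proposition~\ref{p:spread} every hyperplane occurs as a polar hyperplane, so that $\posu K$ is onto $\PG nF^*$. First I would feed $\posu K$ into the universal property of the Pl\"ucker embedding (Theorem~\ref{t:havlicek}), obtaining a unique linear map $\mu\colon\PG MF\to\PG nF^*$ with $\posu K=\immplu n1\mu$; here the target of $\immplu n1$ is $\PG MF$ because $\binom{n+1}{2}-1=M$. Surjectivity of $\posu K$ forces $\mu$ to be onto, so by the structure theorem for linear maps (Theorem~\ref{t:brauner}) $\mu$ is the projection from its exceptional subspace $\mathbb A(\mu)$ onto a complementary $n$-subspace, followed by a collineation. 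Hence $\dim\mathbb A(\mu)=M-n-1$, and the preimage under $\mu$ of a point $H\in\PG nF^*$, together with $\mathbb A(\mu)$, is a subspace $R=\langle\mathbb A(\mu),p\rangle$ (with $p^\mu=H$) of dimension $M-n$.

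To finish the linearity statement I would observe that the absence of singular lines means precisely $\vargra n1\cap\mathbb A(\mu)=\emptyset$, since a point of $\vargra n1$ lying in $\mathbb A(\mu)$ would be the image of a singular line. Intersecting $R$ with the Grassmann variety therefore picks out exactly the lines sent by $\posu K$ to $H$, giving $\calf_H^{\immplu n1}=R\cap\vargra n1$ with $R$ an $(M-n)$-subspace, as claimed.

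For non-geometricity I would reduce the problem to regularity. If $\calf_H$ is geometric, then for any two of its lines $\ell,m$ the lines of $\calf_H$ inside the solid $\Pi=\ell\vee m\cong\PG 3F$ form a spread of $\Pi$; restricting $R$ to the $5$-subspace spanned by the Klein quadric of $\Pi$ shows, by the argument just given, that this induced spread is again \emph{linear}. A linear spread of $\PG 3F$ is a subspace section of the Klein quadric containing no line of it, which forces that section to be an elliptic quadric in a solid, i.e.\ a regular spread. Thus a geometric $\calf_H$ is regular, hence Desarguesian, and a regular line spread requires an elliptic quadric section of a Klein quadric, equivalently an anisotropic binary quadratic form over $F$, equivalently a quadratic extension of $F$. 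Case~(a) now follows at once: a quadratically closed field admits no such extension (in odd characteristic the discriminant of a binary form is always a square, so no elliptic quadric exists), whence no regular spread exists and $\calf_H$ cannot be geometric.

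Case~(b) is where I expect the real work. Over $\mathbb F_q$ a quadratic extension and Desarguesian spreads do exist, so the non-geometricity of the \emph{particular} spread $\calf_H$ must be established rather than excluded on existence grounds. My plan is to assume $\calf_H$ regular and exploit the induced $\mathbb F_{q^2}$-structure on $\ker h$, namely multiplication by a generator $\theta$ with spread lines $\langle v,\theta v\rangle$, to pull a contradiction back to the alternating trilinear form $\omega$ defining $K$: the relation $\omega(v,\theta v,\cdot)\in Fh$ for all $v$ should, after polarisation in $v$, degenerate one of the bilinear forms $\omega(a,\cdot,\cdot)$ and thereby produce a singular line, contradicting the hypothesis. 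A fallback route is a dimension count: since $\calf_H^{\immplu n1}$ spans a subspace contained in $R$, of dimension $M-n=\binom n2-1$, one bounds $\dim\langle\calf_H^{\immplu n1}\rangle$ from the complex and compares it with the span of the Pl\"ucker image of a Desarguesian spread of $\PG{n-1}{\mathbb F_q}$, which I expect to be strictly larger. Making either the descent argument rigorous or pinning down this span comparison is the main obstacle.
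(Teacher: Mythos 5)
Your first part and your case (a) follow the paper's proof almost exactly: the paper likewise factors $\posu K=\immplu n1\pi\kappa$ via Theorems~\ref{t:havlicek} and \ref{t:brauner}, with $\pi$ the projection from a centre $C$ (your $\mathbb A(\mu)$) of dimension $M-n-1$, observes that the absence of singular lines gives $C\cap\vargra n1=\emptyset$, sets $R:=C\vee H^{\kappa^{-1}}$, and, assuming $\calf_H$ geometric, shows that the induced spread $\calf_1$ of a solid is cut out on the Klein quadric $Q_5^+$ by a subspace, so that $\calf_1^{\immplu n1}$ must be an elliptic quadric $Q_3^-$ -- impossible over a quadratically closed field. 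Up to that point your argument is correct and essentially the same.

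The genuine gap is case (b), which you leave explicitly unfinished, and your framing of it is mistaken: no computation with the defining trilinear form and no span comparison is needed, because the finite case dies by the \emph{same} reduction as (a) plus one extra observation you are missing. Since $C$ is a hyperplane of $R$ and has codimension $n+1$ in $\PG MF$, there is a hyperplane $J$ of $\PG MF$ with $J\cap R=C$; then $J\cap\calf_H^{\immplu n1}\subseteq C\cap\vargra n1=\emptyset$, i.e.\ the whole Pl\"ucker image of the spread is avoided by a hyperplane. If $\calf_H$ were geometric, $J$ would meet the solid spanned by the elliptic quadric $Q_3^-=\calf_1^{\immplu n1}$ (which lies inside $R$) in at least a plane, and that plane would be exterior to $Q_3^-$; over a finite field an elliptic quadric of $\PG 3q$ admits no exterior plane \cite[p.~17]{Hi85}, a contradiction. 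So regularity of the induced spread is not where the finite case gets stuck -- the datum $J\cap\calf_H^{\immplu n1}=\emptyset$ is what kills it. Your two fallback routes (polarising $\omega(v,\theta v,\cdot)$, or comparing $\dim\langle\calf_H^{\immplu n1}\rangle$ with the span of a Desarguesian spread's image) are speculative and, as you concede, not carried out, so your proposal does not prove (b). Note also that the exterior-plane trick is exactly why the proposition is restricted to the hypotheses (a) and (b): the paper remarks that over a general infinite field an elliptic quadric may well have an exterior plane, so this argument does not extend.
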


\begin{proof}
By Theorems \ref{t:havlicek} and \ref{t:upolarity}, the null polarity $\posu K$
can be written as
\begin{equation*}
    \posu K = \immplu n1\pi\kappa,
\end{equation*}
where $\immplu n1$ is the Pl\"{u}cker embedding, $\pi$ is a projection of $\PG MF$
from a subspace $C$ with dimension $(M-n-1)$ onto a complementary $n$-subspace
$D$, and $\kappa: D\rightarrow\PG nF^*$ is a collineation. The non-existence of
singular lines yields
\begin{equation*}
    C\cap\vargra n1 = \emptyset.
\end{equation*}
So, $\calf_H^\immplu n1$ is the intersection of $\vargra n1$ with the subspace
$R:=C\vee H^{\kappa^{-1}}$ of $\PG MF$.
\par
Now suppose that (a) or (b) holds. We assume that $\calf_H$ is a geometric line
spread. By the above, there is a hyperplane, say $J$, of $\PG MF$, such that
$J\cap R=C$, whence
\begin{equation*}
    J\cap\calf_H^\immplu n1=\emptyset.
\end{equation*}
As $\calf_H$ is geometric, there is a solid $U$ of $\PG nF$ such that the lines
of $U$ belonging to $\calf_H$ form a line spread of $U$, say $\calf_1$.
Furthermore, $\calf_1^\immplu n1$ is the intersection of $R$ with a quadric
$Q_5^+$ (the Klein quadric representing the lines of $U$). By the table in
\cite[pp.~29--31]{Hi85} (which remains true, mutatis mutandis, also for an
infinite field), $\calf_1^\immplu n1$ has to be an elliptic quadric $Q_3^-$.
This is impossible if (a) holds. On the other hand, for a finite field $F$ we
get from $J\cap Q_3^- = \emptyset$ the contradiction that $Q_3^-$ would have an
exterior plane \cite[p.~17]{Hi85}.\end{proof}
\par
The proof from the above cannot be carried over to all infinite fields, since
an elliptic quadric may have an exterior plane.

\begin{rem}
Proposition~\ref{p:k_proiez} for $k=2$ together with the previous theorem
implies the following: Any linear complex of $h$-subspaces, $h>1$, having no
singular $(h-1)$-subspaces, yields a non-geometric linear line spread if $F$
satisfies one of the conditions (a) or (b). The authors do not know, whether
under these circumstances non-geometric linear line spreads exist or not. In
case of their non-existence, Theorem~\ref{t:Ba52} of Baldassarri and
Corollary~\ref{c:Ba52} would also hold for projective spaces over finite or
quadratically closed fields.
\end{rem}

\section{Linear line partitions}\label{se:partition}

A \emph{line partition\/} $\Omega$ of a projective space $\PG nF$ is a
partition of its line set into line spreads of hyperplanes such that each
hyperplane contains precisely one of these spreads. Each line partition of $\PG
nF$ induces a surjective map $\pi_\Omega$ of the line set onto the dual space
$\PG nF{}^*$ as follows: It assigns to each line $\ell$ the unique hyperplane
containing the equivalence class of $\ell$. Observe that this $\pi_\Omega$ is
globally defined on the line set.
\par
Line partitions of finite projective spaces were investigated in
\cite{FHVa83,FHVa84,FHJiVa86,FHMuTo06,To05}. Here $n$ necessarily has to be
even. In particular, we quote the following result:
\begin{teor}\emph{\cite{FHVa84,FHJiVa86}}\label{t:bluff}
Each finite projective space $\PG{2^i-2}{q}$ with $i\ge 2$ admits a line
partition.
\end{teor}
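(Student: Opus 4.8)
The plan is to prove Theorem~\ref{t:bluff} by induction on $i$, treating it as a purely combinatorial existence statement rather than trying to route it through the linear machinery of the previous sections: by Proposition~\ref{p:lin_spread} the spreads produced by a linear complex of planes are non-geometric over a finite field, and a \emph{linear} line partition would require exactly the kind of singular-line-free complex whose existence is left open above. So I do not expect the linear construction to help, and I aim instead for a direct recursive one. Before starting I would record the parity constraint already noted in the text: each hyperplane $H$ of $\PG nq$ is a $\PG{n-1}q$ that must carry a line spread $\calf_H$, which forces $n-1$ odd, i.e. $n$ even, and $n=2^i-2$ is indeed even. It is convenient to phrase the goal through the global surjection $\pi_\Omega$: building a line partition is the same as exhibiting a map assigning to each line $\ell$ a hyperplane $\pi_\Omega(\ell)\supseteq\ell$ whose fibres are spreads of the corresponding hyperplanes. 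The number of hyperplanes times the spread size, $\tfrac{q^{n+1}-1}{q-1}\cdot\tfrac{q^{n}-1}{q^{2}-1}$, matches the number of lines of $\PG nq$, so the fibre sizes are forced to be exactly right and the whole content lies in realising this abstract fibration geometrically.

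The base case $i=2$ is $\PG 2q$: here every hyperplane is a line $\ell$, the only line spread of $\ell$ is $\{\ell\}$ itself, and the partition of the line set into singletons is trivially a line partition. For the inductive step I would look for a \emph{doubling} construction reflecting the recursion $2^{i}-1=2(2^{i-1}-1)+1$ on the underlying vector-space dimensions. Since the statement is universally quantified over the prime power $q$, the induction hypothesis is available for \emph{every} ground field at level $i-1$, so one has at one's disposal both a line partition of $\PG{2^{i-1}-2}{q}$ and the parallelism structure of the odd-dimensional space $\PG{2^{i-1}-1}{q}$ (together with the standard field-reduction/Desarguesian-spread dictionary). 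The natural attempt is to split the ambient vector space of $\PG{2^{i}-2}{q}$ as the direct sum of a part carrying the inductively given partition and a complementary part absorbing the extra dimension ``$+1$'', then to define $\pi_\Omega$ on the lines lying inside each summand using the inherited data.

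The main obstacle, and the heart of the argument, is the behaviour on the \emph{transversal} lines --- those not contained in either summand --- and the simultaneous verification that the resulting fibres $\calf_H$ are spreads of \emph{every} hyperplane while covering each line exactly once. I expect this to be where the special value $n=2^{i}-2$ is genuinely used: the bookkeeping only closes up when the number of hyperplanes, the spread size and the orbit lengths arising in the doubling step are reconciled by the divisibility properties of $2^{i}-1$, and the decisive step is likely a double-counting (or Singer-cycle orbit) argument certifying that no transversal line is assigned twice and none is omitted. Once the fibre condition has been checked on a distinguished hyperplane and on the hyperplanes through a fixed point, a transitivity argument --- a group acting suitably on the remaining hyperplanes --- would promote it to all of $\PG nq^{*}$ and close the induction. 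I would expect the careful accounting of the transversal lines, rather than any single clever idea, to be the delicate part; the global counting identity above is the sanity check that keeps this accounting honest.
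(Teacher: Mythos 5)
There is a genuine gap here, and it is worth noting first that the paper itself does not prove Theorem~\ref{t:bluff}: it is quoted from \cite{FHVa84,FHJiVa86}, and the paper's closing remark even stresses that ``the proof of their existence is only sketched in the literature''. So there is no internal proof to compare against; your attempt must stand on its own, and it does not. Your base case $i=2$ is correct (the singleton partition of the line set of $\PG 2q$), and your parity and counting observations are sound, but everything beyond that is a plan rather than an argument. The entire inductive step is deferred: you never define $\pi_\Omega$ on the transversal lines --- which constitute the overwhelming majority of the lines of $\PG{2^i-2}{q}$ --- and you explicitly flag the verification that the fibres $\calf_H$ are spreads as something you ``expect'' to follow from a double-counting and a transitivity argument, neither of which is supplied. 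Both are doubtful as stated: no subgroup of $\mathrm{PGL}$ preserving your chosen direct-sum decomposition acts transitively on the hyperplanes of $\PG nq$, so the proposed ``promotion'' from a distinguished hyperplane to all of $\PG nq^*$ has no mechanism behind it; and the global counting identity you invoke holds for \emph{every} even $n$ (it is satisfied, e.g., for $n=4$ and $n=6$, where the paper shows linear partitions fail and Proposition~\ref{p:pg6q} rules out the linear case for $n=6$), so counting alone cannot be what forces $n=2^i-2$ or close the fibration.

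There is also a structural mismatch in the proposed doubling. Splitting the ambient $(2^i-1)$-dimensional vector space as you suggest leaves a complementary summand of projective dimension $2^{i-1}-1$, which carries a line spread, while the other summand, of projective dimension $2^{i-1}-2$, carries the inductive partition; but a hyperplane of $\PG{2^i-2}{q}$ in general position meets both summands obliquely, and nothing in your sketch explains how the spread of one summand and the partition of the other induce a single coherent spread in such a hyperplane while covering each transversal line exactly once. This is precisely where the known constructions do real work: Fuji-Hara and Vanstone proceed via skew resolutions and difference-set techniques rather than a naive direct-sum recursion. In short, your proposal correctly identifies the hard part of the problem but does not solve it; as a proof it is incomplete at exactly the step that carries all the content.
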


A line partition $\Omega$ will be called \emph{linear\/} if $\pi_\Omega$ from
the above is a linear mapping. The linear line partitions are closely related
with certain linear complexes.

\begin{teor}\label{t:polaritagen}
Let $\Omega$ be a line partition of $\PG nF$. Denote by $K$ the set of all
planes $\varepsilon$ such that a line $\ell$ exists which satisfies the
condition
\begin{equation*}
    \ell\subseteq\varepsilon\subseteq\ell^{\pi_\Omega}.
\end{equation*}
This $K$ is a linear complex of planes if, and only if, $\Omega$ is linear. In
this case, $K$ has no singular line.
\end{teor}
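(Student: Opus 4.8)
The plan is to reduce both halves of the equivalence to the single statement $\pi_\Omega=\posu K$, and to read off the absence of singular lines from the fact that $\pi_\Omega$ is globally defined. To set up, observe that $\pi_\Omega\colon\Gamma(n,1)\to\PG nF^*$ is a \emph{global} partial map: it is defined on every line, and since each $\ell$ lies in the spread $\calf_{\ell^{\pi_\Omega}}$ of the hyperplane $\ell^{\pi_\Omega}$, it satisfies $\ell\subseteq\ell^{\pi_\Omega}$, i.e.\ the null property (\ref{e:nullprop}), and has non-empty domain. Comparing the definition of $K$ with the set constructed in the proof of Theorem~\ref{t:converse} (applied with $h=2$ and $\chi=\pi_\Omega$), one sees that the plane $\varepsilon$ lies in $K$ exactly when there is a line $\ell\subseteq\varepsilon$ with $\varepsilon\subseteq\ell^{\pi_\Omega}$, which is precisely the membership condition used there (the requirement $\ell\subseteq\varepsilon$ being automatic).

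First I would treat the direction ``$\Omega$ linear $\Rightarrow K$ a linear complex''. If $\Omega$ is linear, then $\pi_\Omega$ is a linear mapping; being global it has non-empty domain and, as noted, the null property. Theorem~\ref{t:converse} then produces a unique linear complex of planes, say $K'$, with $\pi_\Omega=\posu{K'}$, and the identification above gives $K'=K$. Hence $K$ is a linear complex. Since $\posu K=\pi_\Omega$ is global, its exceptional set is empty; as the exceptional lines of $\posu K$ are precisely the singular lines of $K$, this shows $K$ has no singular line, which also settles the final assertion.

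For the converse I would assume $K$ is a linear complex and prove $\pi_\Omega=\posu K$; since $\posu K$ is a null polarity, hence a linear mapping by Theorem~\ref{t:upolarity}(a), this forces $\pi_\Omega$ to be linear, i.e.\ $\Omega$ linear. The first step is a lemma: for every \emph{non-singular} line $\ell$ one has $\ell^{\posu K}=\ell^{\pi_\Omega}$. Indeed, every plane $\varepsilon$ with $\ell\subseteq\varepsilon\subseteq\ell^{\pi_\Omega}$ lies in $K$ (witnessed by $\ell$ itself), and such planes cover the hyperplane $\ell^{\pi_\Omega}$; as they all contain $\ell$, this yields $\ell^{\pi_\Omega}\subseteq\ell^{\posu K}$, and equality follows because both sides are hyperplanes. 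What remains, and is the crux of the matter, is to exclude singular lines.

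The main obstacle is exactly this exclusion, and here the \emph{partition} hypothesis must enter, since a generic linear complex of planes does admit singular lines. Suppose $\ell_0$ were singular. Because the domain of $\posu K$ is non-empty and $\Gamma(n,1)$ is connected, Corollary~\ref{c:fasciosingolare} lets me find a pencil $\varphi_0$ through $\ell_0$ that contains a non-singular line, so $\varphi_0\not\subseteq\mathbb A(\posu K)$ while $\ell_0\in\mathbb A(\posu K)$. Linearity of $\posu K$ then forces case (ii) of the definition of a linear mapping on $\varphi_0$: $\ell_0$ is the unique exceptional line, and one hyperplane $E_0$ satisfies $t^{\posu K}=E_0$ for every other $t\in\varphi_0$. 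By the lemma these $t$ are non-singular with $t^{\pi_\Omega}=E_0$, hence all belong to the single spread $\calf_{E_0}$. But the lines of $\varphi_0$ share a common point $P$, and $P\in t\subseteq E_0$; since $|\varphi_0|=|F|+1\ge3$, at least two lines of $\calf_{E_0}$ pass through the point $P\in E_0$, contradicting the spread property of $\calf_{E_0}$. Thus no line is singular, $\posu K$ is global, and $\posu K=\pi_\Omega$ by the lemma, completing the argument. I expect the delicate points to be the connectivity step producing a pencil through $\ell_0$ with a non-singular line, together with the routine book-keeping of degenerate small-dimensional cases guaranteeing the covering and pencil-size assertions.
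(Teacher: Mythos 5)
Your proposal is correct and follows essentially the same route as the paper: the forward implication via Theorem~\ref{t:converse} applied to $\chi=\pi_\Omega$ (whose globality and null property you rightly verify, and your identification of its complex with the stated $K$ is exactly the paper's), and the converse via the identity $\ell^{\posu K}=\ell^{\pi_\Omega}$ for non-singular $\ell$ --- which you prove where the paper says ``obviously'' --- followed by a pencil containing both a singular and a non-singular line, whose two non-singular members give two lines of the spread $\calf_{E_0}$ through the pencil's vertex, the same contradiction as in the paper. The one blemish is your claim that such a mixed pencil can be found \emph{through $\ell_0$ itself}: connectivity of $\Gamma(n,1)$ only yields a mixed pencil somewhere along a path from $\ell_0$ to a non-singular line (the paper constructs it via a transversal meeting both lines, and its singular member need not be $\ell_0$), but since your contradiction uses $\ell_0$ only as the unique exceptional line of the pencil, a relabelling repairs this at no cost.
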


\begin{proof}
If $\Omega$ is linear, then $\chi:=\pi_\Omega$ satisfies the assumptions of
Theorem~\ref{t:converse}. This implies that $K$ is a linear complex of planes
without singular lines.
\par
Conversely, assume that the given set $K$ is a linear complex of planes. Let
$\ell$ be a non-singular line. Then $\ell^{\pi_\Omega}$ and $\ell^{\posu K}$
are hyperplanes which obviously are identical. Next, we show that singular
lines do not exist. Assume to the contrary that $\ell_1'$ is a singular line.
Since $\posu K$ has a non-empty domain, also a non-singular line $\ell_2'$
exists. There is a line which has a point in common with $\ell_1'$ and
$\ell_2'$, respectively. This line is either singular or non-singular. Hence
there exists a pencil of lines containing a singular line $\ell_1$ and a
non-singular line $\ell_2$, say. Since $\posu K$ is linear, there is another
non-singular line $\ell_3\neq \ell_2$ in this pencil. We infer from the above
and from the linearity of $\posu K$ that
\begin{equation*}
    \ell_2^{\pi_\Omega}=\ell_2^{\posu K}=\ell_3^{\posu K}=\ell_3^{\pi_\Omega}.
\end{equation*}
Thus the distinct incident lines $\ell_2$ and $\ell_3$ belong to the same line
spread, a contradiction. Therefore ${\pi_\Omega}={\posu K}$ which in turn
implies the linearity of $\Omega$.
\end{proof}

By Proposition~\ref{p:spread} and Theorem~\ref{t:polaritagen}, there is a
bijective correspondence between linear complexes of planes without singular
lines and linear line partitions. The linear complex $K$ defined in the theorem
will be called the linear complex of planes \emph{related\/} to the linear line
partition $\Omega$.

\begin{prop}
No projective space $\PG 4F$ admits linear line partitions.
\end{prop}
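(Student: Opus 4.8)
The plan is to argue by contradiction, reducing a putative linear line partition of $\PG 4F$ to the existence of a \emph{total hyperplane} of the associated linear complex, which is incompatible with the line-spread structure. So suppose $\Omega$ is a linear line partition of $\PG 4F$. By Theorem~\ref{t:polaritagen} the associated set $K$ is a linear complex of planes in $\PG 4F$ without singular lines, and $\posu K=\pi_\Omega$. In particular, by Proposition~\ref{p:spread}, $n=4$ and for every hyperplane $H$ the fibre $\calf_H=\{\ell\mid\ell^{\posu K}=H\}$ is a line spread of $H\cong\PG 3F$.

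The key step is to produce a total hyperplane of $K$, i.e.\ a solid $V$ with $[\emptyset,V]_2\subseteq K$. Here I would pass to the correlation available over a commutative $F$, which induces an isomorphism $\Gamma(4,2,F)\cong\Gamma(4,1,F)$. Since primes are preserved, the image $K^\delta$ of $K$ under such a correlation $\delta$ is, by Proposition~\ref{p:primi}, a linear complex of \emph{lines} in $\PG 4F$. Applying Proposition~\ref{p:npiuh} with $n=4$, $h=1$ (so that $n-h=3$ is odd) shows that the empty subspace lies in a singular point of $K^\delta$; equivalently, $K^\delta$ possesses a singular point $P$. Under $\delta$ a star of planes through a line maps to a dual star of lines in a plane, and a dual star of planes in a solid maps to a star of lines through a point; hence total hyperplanes of $K$ correspond exactly to singular points of $K^\delta$. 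Thus $V:=P^{\delta^{-1}}$ is a total hyperplane of $K$.

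Finally I would exploit this total hyperplane to contradict the spread property of $\calf_V$. Fix any line $\ell\subseteq V$. The planes of $V$ through $\ell$ form a pencil whose union is all of $V$, and every one of them lies in $K$ because $V$ is total. Consequently the polar hyperplane $\ell^{\posu K}$, being a single solid that contains this union $V$, must equal $V$; that is, $\ell^{\posu K}=V$ for \emph{every} line $\ell\subseteq V$. Therefore $\calf_V$ contains all lines of $V$. But $V\cong\PG 3F$ carries pairs of distinct coplanar lines meeting in a point, whereas the lines of a spread are pairwise disjoint; so $\calf_V$ cannot be a line spread of $V$, contradicting Proposition~\ref{p:spread}. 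Hence no linear line partition of $\PG 4F$ exists.

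I expect the main obstacle to be the second step, namely securing a total hyperplane over an \emph{arbitrary} commutative field: Baldassarri's dimension count (Theorem~\ref{t:Ba52} and Corollary~\ref{c:Ba52}) only yields total/singular subspaces over $\PG 4{\mathbb C}$, so one genuinely needs the parity argument for alternating forms on the odd-dimensional space $F^5$ (every such form is degenerate), transported through the correlation. Once the total hyperplane is in hand, the concluding contradiction with the spread $\calf_V$ is routine.
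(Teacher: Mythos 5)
Your proof is correct, and while it shares its opening move with the paper --- dualising $K$ to a linear complex of lines and exploiting that every alternating form on $F^5$ is degenerate, so a singular point of the dual complex exists --- the endgame is genuinely different. The paper stays entirely on the dual side: it takes the full radical $S$ of singular points of $K^*$, splits into the two possible cases (rank~$2$: $S$ is a plane, which is itself ruled; rank~$4$: $S$ is a point, and $S\vee\ell$ is a ruled plane for any $\ell\in K^*$ not through $S$), and in each case exhibits a ruled plane of $K^*$, i.e.\ a \emph{singular line} of $K$. This directly proves the stronger statement that \emph{every} linear complex of planes in $\PG 4F$ has a singular line, without any reference to spreads. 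You instead transport a single singular point of $K^\delta$ back as a \emph{total hyperplane} $V$ of $K$ (your duality dictionary --- dual stars of planes in a solid $\leftrightarrow$ stars of lines through a point --- is correct) and then derive the contradiction from the fibre structure: since the pencil $[\ell,V]_2$ lies in $K$ and covers $V$, necessarily $\ell^{\posu K}=V$ for every line $\ell\subseteq V$, so $\calf_V$ would be the full line set of $V$ and not a spread, against Proposition~\ref{p:spread}. This buys you freedom from the rank case distinction and from constructing a ruled plane explicitly, at the price of invoking Proposition~\ref{p:spread}; note that since that proposition only assumes the absence of singular lines, your argument in fact also establishes the paper's stronger conclusion that no linear complex of planes in $\PG 4F$ is free of singular lines. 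Your closing caveat is also well placed: Corollary~\ref{c:Ba52} is only available over $\mathbb C$, and the parity argument for alternating forms (which the paper records just before Proposition~\ref{p:npiuh}) is exactly what replaces it over an arbitrary commutative field.
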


\begin{proof}
A linear complex of planes, say $K$, in $\PG 4F$ is dual to a linear
complex of lines $K^*$ in $\PG 4F^*$. The singular lines of $K$ are dual to
ruled planes of $K^*$. Let $S$ be the set of all singular points of $K^*$
(actually, hyperplanes of $\PG 4F$). If $S$ is a dual plane, then obviously
$K^*$ contains ruled planes. Otherwise $S$ is a point and there is a line
$\ell$ of $K^*$ such that $S\not\in\ell$. The plane $S\vee\ell$ is a ruled
plane in $K^*$. This proves that each linear complex of planes in $\PG 4F$ has
at least one singular line.
\end{proof}

Some finite four-dimensional projective spaces admit line partitions. In
particular this holds in $\PG 4q$ for $q=2,3$ \cite{FHVa84} and for $q=5,8,9$
\cite{To05}. By the above all these line partitions are necessarily non-linear.

\begin{prop}\label{p:pg6q}
No finite projective space $\PG 6q$ admits linear line partitions.
\end{prop}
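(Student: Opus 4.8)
The plan is to invoke the bijective correspondence recorded after Theorem~\ref{t:polaritagen} (built from Proposition~\ref{p:spread} and Theorem~\ref{t:polaritagen}): $\PG 6q$ admits a linear line partition if and only if it carries a linear complex of planes without singular lines. Thus it suffices to prove that \emph{every} linear complex of planes $K$ in $\PG 6q$ has a singular line. Note that here $n-h=6-2=4$ is even, so Proposition~\ref{p:npiuh} gives nothing and a genuinely finite-field argument is required.

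First I would encode $K$ algebraically. With $V=F_q^7$, the Pl\"{u}cker embedding identifies $K$ with the set of $3$-dimensional subspaces $X\subseteq V$ killed by a fixed nonzero alternating trilinear form $\omega\in\Lambda^3V^*$, that is, $\omega|_X=0$. A line $\langle u,v\rangle$ of $\PG 6q$ is then singular exactly when $\iota_u\iota_v\omega=0$. For $v\in V$ set $\omega_v:=\iota_v\omega$, an alternating $2$-form whose Gram matrix $A_v$ depends linearly on $v$. Since $\omega_v(v,\cdot)=\omega(v,v,\cdot)=0$, the vector $v$ always lies in the radical of $\omega_v$; and because $\dim V=7$ is odd while alternating forms have even rank, $\operatorname{rad}(\omega_v)$ has odd dimension at least $1$. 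A short check shows that a singular line through $\langle v\rangle$ exists iff $\dim\operatorname{rad}(\omega_v)\ge2$. Hence I would \textbf{assume, for a contradiction, that $K$ has no singular line}, so that $\operatorname{rad}(\omega_v)=\langle v\rangle$, i.e.\ $\operatorname{rank}A_v=6$, for every $v\neq0$.

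The crux is then a Pfaffian computation performed over the function field $F_q(x_0,\dots,x_6)$. Let $\psi(x)$ be the vector of signed principal $6\times6$ sub-Pfaffians of $A_x$; its coordinates are cubic forms, and the classical identity for odd-sized alternating matrices gives $A_x\,\psi(x)=0$ identically, while $A_x\,x=0$ holds identically as well (again because $\omega$ is alternating, $\sum_j\omega(x,e_i,e_j)x_j=\omega(x,e_i,x)=0$). Under the assumption $A_x$ has rank $6$ at some, hence at the generic, point, so $\ker A_x$ is one-dimensional over $F_q(x)$; as it contains the nonzero vectors $x$ and $\psi(x)$, we get $\psi(x)=r(x)\,x$ for some $r\in F_q(x)$. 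Reading off $\psi_i x_j=\psi_j x_i$ as polynomial identities and using that $x_0$ divides $\psi_0 x_j$ with $x_0,x_j$ coprime, one finds $x_0\mid\psi_0$, so that $r=\psi_0/x_0\in F_q[x_0,\dots,x_6]$ is a genuine \emph{homogeneous quadratic form} over $F_q$. Working over the function field (rather than pointwise) is precisely what makes this step valid uniformly in $q$.

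Finally I would extract the contradiction. The no-singular-line hypothesis forces $\psi(v)\neq0$, hence $r(v)\neq0$, for every $v\neq0$; that is, $r$ is an \emph{anisotropic} quadratic form in $7$ variables over $F_q$. But a quadratic form in more than two variables over a finite field is isotropic: a nontrivial zero exists already by the Chevalley--Warning theorem, since $\deg r=2<7$. This contradiction shows $K$ must possess a singular line, and the proposition follows. I expect the main obstacle to be exactly the third step --- producing $r$ as an honest polynomial quadratic form over $F_q$ for \emph{all} $q$ (notably in characteristic $2$, where ``alternating'' forms and Pfaffians must be treated carefully); the function-field Pfaffian argument is what circumvents the usual pointwise-versus-formal difficulty over small fields.
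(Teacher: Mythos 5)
Your proof is correct, and it takes a genuinely different route from the paper's. Both arguments use Theorem~\ref{t:polaritagen} to reduce the proposition to showing that \emph{every} linear complex of planes in $\PG 6q$ has a singular line, but from there the paper proceeds by restriction: it cuts a hypothetical $K$ with a hyperplane $H$, observes that the singular lines of the induced complex $K_H$ in $\PG 5q$ are exactly the spread $\calf_H$ of Proposition~\ref{p:spread}, and then invokes the classification result \cite[Theorem~14]{FeZa07}, by which the unique such complex in $\PG 5q$ has a \emph{geometric} spread of singular lines --- contradicting Proposition~\ref{p:lin_spread}. You instead argue directly and algebraically: the complex is encoded by a nonzero alternating trilinear form $\omega$ on $\mathbb{F}_q^7$, the no-singular-line hypothesis becomes $\operatorname{rank}A_v=6$ for all $v\neq0$, and the sub-Pfaffian vector $\psi$ together with the identities $A_x\psi(x)=0=A_xx$ over $\mathbb{F}_q(x_0,\dots,x_6)$ yields a homogeneous quadratic $r$ with $\psi_i=r\,x_i$ that would have to be anisotropic in seven variables, which Chevalley--Warning forbids. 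I verified the delicate steps and they all hold: the equivalence ``a singular line through $\langle v\rangle$ exists iff $\dim\operatorname{rad}(\iota_v\omega)\ge2$'' (using that $v$ always lies in the radical and that the rank is even), the specialization argument giving generic rank $6$, the automatic nonvanishing $\psi_0\neq0$ (since $r\neq0$ in the function field) which makes $r=\psi_0/x_0$ well defined and, by the coprimality argument, polynomial, and the characteristic-$2$ bookkeeping, where the Pfaffian calculus for zero-diagonal matrices and the identity $A\psi=0$ are valid over any commutative ring. As for what each approach buys: the paper's proof is very short but imports the $\PG 5q$ classification of \cite{FeZa07} and, through Proposition~\ref{p:lin_spread}, Hirschfeld's tables on quadrics; yours is self-contained, uniform in $q$, and conceptually illuminating --- it shows the obstruction is exactly the isotropy of a quadratic form in seven variables, which explains why no contradiction arises over $\mathbb{R}$ (for the $G_2$-invariant $3$-form on the imaginary octonions your $r$ is essentially the anisotropic norm form) and in fact proves more than stated, namely that no linear complex of planes without singular lines, hence no linear line partition, exists in $\PG 6F$ over any field $F$ in which every quadratic form in seven variables is isotropic (e.g.\ any $C_1$ field), whereas the paper's route is tied to the finite case.
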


\begin{proof}
Let $K$ be a linear complex of planes in $\PG 6q$, without singular lines.
Denote by $K_H$ the set of planes in $K$ which are contained in some hyperplane
$H$. This $K_H$ is a linear complex of planes in the five-dimensional finite
projective space $H$. Let $\calf_H$ be the line spread in $H$ given according
to Proposition~\ref{p:spread}. The elements of $\calf_H$ are precisely the
singular lines of $K_H$. In \cite[Theorem~14]{FeZa07} it is shown that (to
within projective transformations) a unique linear complex of planes in $\PG
5q$ exists such that its singular lines form a line spread; moreover, this
spread turns out to be geometric. This contradicts
Proposition~\ref{p:lin_spread}.
\end{proof}

\begin{rem}
It seems to be unknown whether or not linear line partitions do exist. In
particular we do not know, if the line partitions of PG$(2^i-2,q)$ by Fuji-Hara
and Vanstone (cf.\ Theorem~\ref{t:bluff}) are linear. As a matter of fact, to
our knowledge, the proof of their existence is only sketched in the literature.
\end{rem}

\begin{rem}
The authors conducted an extensive computer-based search for linear complexes
of planes without singular lines in $\PG 8q$, $q=2,3$, but no examples were
found. Thus a proof for the (non)-existence of linear complexes of
$h$-subspaces, $h\ge 2$, having no singular $(h-1)$-subspaces remains an
enticing open problem, even in the finite case.
\end{rem}

Acknowledgement. The second author acknowledges financial support from the
National Research Project ``Strutture geometriche, Combinatoria e loro
Applicazioni'' of the Italian \emph{Ministero dell'Universit\`{a} e della Ricerca},
PRIN 2005.

\end{document}